\newcounter{contador}
\newcounter{teoA}
\newtheorem{theorem}{Theorem}
\newtheorem{propo}[contador]{Proposition}
\newtheorem{lem}[contador]{Lemma}
\newtheorem{corol}[contador]{Corollary}
\theoremstyle{remark}
\newtheorem{nota}[contador]{Remark}
\newcounter{ex}
\newcommand{\sign}{{\rm sign}}
\newcommand*{\bigchi}{\mbox{\Large$\chi$}}
\newcommand{\dr}{{\rm{{d}}}}
\newcommand{\R}{{\mathbb R}}
\newcommand{\C}{{\mathbb C}}
\title{Stability index of linear random dynamical systems\footnote{The authors are supported by
Ministry of Science and Innovation--State Research Agency
of the Spanish
Government through grants PID2019-104658GB-I00 (MICINN/AEI, first and second
authors) and DPI2016-77407-P
 (MICINN/AEI/FEDER, UE, third author). The first and second authors are also supported
 by the grant 2017-SGR-1617  from
AGAUR,  Generalitat de Catalunya. The third author acknowledges the
group's research recognition 2017-SGR-388 from AGAUR, Generalitat de
Catalunya.}}
\author{Anna Cima$^{(1)}$, Armengol Gasull$^{(1,2)}$ and V\'{\i}ctor Ma\~{n}osa$^{(3)}$
  \\*[.1truecm]
{\small \textsl{$^{(1)}$Departament de Matem\`{a}tiques, Facultat
de Ci\`{e}ncies,}}
\\*[-.25truecm] {\small \textsl{Universitat Aut\`{o}noma de Barcelona,}}
\\*[-.25truecm] {\small \textsl{08193 Cerdanyola del Vall\`es, Barcelona, Spain}}
    \\*[-.25truecm] {\small \textsl{cima@mat.uab.cat,
gasull@mat.uab.cat}}\\
 \\*[-.25truecm] {\small \textsl{$^{(2)}$Centre de Recerca Matem\`{a}tica,}}
 \\*[-.25truecm] {\small \textsl{Edifici Cc, Campus de
Bellaterra,}}
    \\*[-.25truecm] {\small \textsl{08193 Cerdanyola del Vall\`es, Barcelona, Spain}}\\
\\*[-.25truecm] {\small \textsl{$^{(3)}$Departament de Matem\`{a}tiques,}}
\\*[-.25truecm] {\small \textsl{Universitat Polit\`{e}cnica de Catalunya}}
\\*[-.25truecm] {\small \textsl{Colom 1, 08222 Terrassa, Spain}}
\\*[-.25truecm] {\small \textsl{victor.manosa@upc.edu}}}
\begin{document}

% ********************** EN CAS D'ARTICLE *********************
\maketitle
\begin{abstract}  Given a homogeneous  linear discrete or continuous dynamical system, its stability index
is given by the dimension of the stable manifold of the zero
solution. In particular, for the $n$ dimensional case, the zero
solution is globally asymptotically stable if and only if this
stability index is $n.$ Fixed $n,$ let $X$ be the random variable that assigns
to each linear random dynamical system its stability index, and let 
$p_k$ with  $k=0,1,\ldots,n,$ denote
the probabilities  that $P(X=k)$. In this paper we obtain either the exact values  $p_k,$ or
their estimations  by combining the Monte Carlo method with a least
square approach that uses some affine relations among the values
$p_k,k=0,1,\ldots,n.$  The particular case of $n$-order homogeneous
linear random differential or difference equations is also studied
in detail.
    \end{abstract}
% *************************************************************

\noindent {\sl  Mathematics Subject Classification 2010:}
37H10, 34F05, 39A25, 37C75.

\noindent {\sl Keywords:} Stability index; random
differential equations; random difference equations;  random
dynamical systems.

%\tableofcontents

\section{Introduction}

Nowadays it is unnecessary to emphasize the importance of ordinary differential equations
and discrete dynamical systems to model real world phenomena, from physics to biology, from
economics to sociology. These dynamical systems, a concept that includes both continuous and
discrete models (and even dynamic equations in time-scales), can have undetermined coefficients that in the
case of real applications, must be adjusted to fixed values  that serve to make good predictions:
this is known as the identification process. Once these coefficients are fixed we obtain a deterministic model.

In recent years some authors have highlighted the utility of considering random rather than
deterministic coefficients to incorporate effects due to errors in the identification process,
natural variability in some of the physical parameters, or as a method to treat and to incorporate
 uncertainties in the model, see  \cite{CS10,CS13,SC09} for examples coming from biological
  modeling and \cite{GS} for examples coming from mechanical systems.

In the same aim that inspires some works like \cite{AL,CGM20,LPP}, in this
paper we focus on giving a statistical measure of the stability for
both discrete and continuous linear dynamical systems,
\begin{equation}\label{e:dynsys1}
\dot{\mathbf{x}}=A\,\mathbf{x} \quad\mbox{or}\quad
\mathbf{x}_{k+1}=A\,\mathbf{x}_k,
\end{equation}
 where both $\mathbf{x},\mathbf{x}_k\in\R^n$ and
$A$ is an $n\times n$ real matrix.

More concretely, in the continuous (resp. discrete) case we define
the \emph{stability index} of the origin, $s(A)$, as the number of
eigenvalues, taking into account their multiplicities, of $A$ with
negative real part (resp. modulus smaller than~1). This index
coincides with the dimension of the invariant stable manifold of the
origin. Notice also that if  $s(A)=n$ (resp. $s(A)=0$) the origin is
a global stable attractor (resp. a global unstable repeller).

 In this work we  study the probabilities $p_k$
   for a linear dynamic system \eqref{e:dynsys1}  to have a given
   stability index $k$ when the parameters of the matrix $A$ are random
   variables with a given natural distribution. As we will see in Section \ref{s:probabilityspace},  this
   distribution must be that all the elements of $A$ are
   \emph{independent and identically distributed} (i.i.d.)
   normal random variables with zero mean. We also will study the
   same question for linear $n$-th order differential equations  and for
    linear difference equations.

We also remark that our results can be extrapolated to know a measure
of the stability behaviour of critical or fixed points for general
non-linear dynamical systems, because near them they can be written
as
\begin{equation*}
\dot{\mathbf{x}}=A\,\mathbf{x}+f(\mathbf{x}),\quad\mbox{or}\quad
\mathbf{x}_{k+1}=A\,\mathbf{x}_k+f(\mathbf{x}_k),
\end{equation*}
with $f$ being  a non-linear term vanishing at zero. Moreover, while
the situation where the origin is non-hyperbolic is negligible, in
the complementary  situation, the stability index of the linear part
coincides with the dimension of the local stable manifold at the
point.

 In the continuous case, the key tool to know the stability index of a
matrix  is the Routh-Hurwitz  criterion, see for
instance~\cite[p.~1076]{GR}. This approach allows to know the number
of  roots of a polynomial with negative real part in terms of
algebraic inequalities among its coefficients. Similarly, its
counterpart for the discrete case is called the Jury criterion. It is
worth observing that in fact both are equivalent and it is possible
to get one from the other by using a M\"{o}bius transformation that
sends the left hand part of the complex plane into the complex ball
of radius 1.

In all the cases, when we do not know how to compute analytically
the searched probabilities, we introduce a two step approach to
obtain estimations of them:
\begin{itemize}

\item {\bf Step 1:} We start using the celebrated  Monte Carlo method.  Recall that
this  computational algorithm  relies on repeated random sampling
and gives estimations of the searched probabilities based on the law
of large numbers and the law of iterated logarithm, see
\cite{AG,BFS,L,Morgan}. It is the case that using $M$ samples this approach
gives the true value with an absolute error of order
$O\big(((\log\log M)/M)^{1/2}\big),$ which practically behaves as
$O(M^{-1/2}),$ where $O$ stands for the usual Landau notation. In all our simulations we will work with
$M=10^8,$ so our first approaches to the desired probabilities will
have an asymptotic absolute error of order $10^{-4}.$ More detailed  explanations of the sharpness of our
estimations for this value of $M$ are given in Section~\ref{ss:method} by using the Chebyshev inequality and the Central limit theorem.

We have used the default in-built pseudo-random number generator in the \texttt{Statistics} package of Maple in our simulations\footnote{Concretely, we use the commands \texttt{RandomVariable(Normal(0,1))} and \texttt{Sample}.}. This procedure use the Mersenne Twister method with period $2^{19937}-1$ to generate 
uniformly-distributed pseudo-random numbers, and then the Ziggurat method, which is a kind of rejection sampling algorithm,  to obtain the normally-distributed  pseudo-random numbers, see \cite{MT} and \cite{MN}.  Observe that our sample size $M=10^8$ is much smaller than the period of the pseudo-random number generator, which is greater that $10^{6001}$.

\item {\bf Step 2:} Since the results of the plain Monte Carlo simulations do not satisfy certain linear constrains concerning the true probabilities, we propose to correct them by using a least squares approach.
We take as final estimates of the true probabilities  the least
squares solution (\cite[Def.~6.1]{Schott}) of the inconsistent
overdetermined system obtained when the Monte Carlo's observed
relative frequencies are forced to satisfy these linear constrains.
See Section~\ref{ss:method} for more details. We would like to remark that there are other options to improve plain Monte Carlo simulations like variance~reduction and quasi-Monte Carlo methods \cite{AG,L}.
\end{itemize}

To have a flavour of the type of results that we will obtain  we
describe several consequences  of some of our results for  linear
homogeneous differential or difference  equations of order $n$ with
constant coefficients (see Sections~\ref{ss:LRODEn}
and~\ref{ss:LRdifference}). A first result is that in both cases the
expected stability index is $n/2.$ Moreover, let $r_n$ denote the
probability of the $0$ solution to be a global stable attractor
(stability index equals $n$) for them. Then, for differential
equations, $r_n\le1/2^n.$ Furthermore, $r_1=1/2,$ $r_2=1/4,$
$r_3=1/16$ and our two step approach gives that
$r_4\simeq0.00925,$ $r_5\simeq0.00071,$ and that $r_k$ is smaller
that $10^{-4}$ for bigger~$k.$ In the case of difference equations
we prove that $r_1=1/2$ and
$r_2=\frac1\pi\arctan(\sqrt{2})\simeq0.304.$

\section{A suitable probability space}\label{s:probabilityspace}

In our approach, the starting point is to determine which is the natural choice 
of the  probability space and the distribution law of the
coefficients of the linear dynamical system. Only after this step is
fixed we can ask for the probabilities of some dynamical features or
some phase portraits.

For completeness, we start with some previous considerations and
with an example, already considered in the literature, see
\cite{AL,LPP,Strog}. Consider the planar linear differential system:
\begin{equation}\label{e:lineal} \left(\begin{array}{c}
\dot{x}\\
\dot{y}
\end{array}\right)
=
\left(
\begin{array}{cc}A & B\\
C & D\end{array}\right)\left(\begin{array}{c}
x\\
y
\end{array}\right)\end{equation} where  $A,B,C,D$ are random variables, so
we can set the sample space to be  $\Omega=\R^4$. It is  plausible to require that these real random variables are independent and identically distributed (i.i.d.)~and continuous. Also, according to the \emph{principle of
indifference} (or principle of insufficient reason) \cite{Conrad},
it would seem reasonable to impose that these variables were such that the
random vector $(A,B,C,D)$ had some kind of uniform distribution in
$\mathbb{R}^4$. But there is no uniform distribution for unbounded
probability spaces. Nevertheless, there is a natural election for
the distribution of the variables $A,B,C$ and $D$.

Indeed, it is well-known that the phase portrait of the above system
does not vary if we multiply the right-hand side of both equations
by a positive constant  (which corresponds  to a proportional change
in the time
scale). This means that in the space of parameters, $\R^4,$ all the
systems with parameters belonging to the same half-straight line
passing through the origin are topologically equivalent and in
particular have the same stability index. Hence, we can ask for  a
probability distribution density $f$ of the coefficients such that
the random vector
\begin{equation}\label{e:randomvector}
\left(\frac{A}{S}\,,\frac{B}{S}\,,\frac{C}{S}\,,\frac{D}{S}\right),
\quad\mbox{with}\quad S=\sqrt{A^2+B^2+C^2+D^2},
\end{equation}
has a uniform distribution on the sphere ${\mathbb S}^3 \subset \R^4$.
This achieves our objective, since ${\mathbb S}^3 $  is a compact set.

The question is: which are the probability densities $f$ that give
rise to a  uniform distribution of the vector \eqref{e:randomvector}
on the sphere? The answer is that, just assuming that $f$ is
continuous and positive, $f$ must be the density of a normal random
variable with zero mean. Moreover, this result is true for arbitrary
dimension: see the next theorem.  We remark that the converse result
is well--known \cite{Mars,Mull}.

\begin{theorem}\label{t:normals}
Let $X_1,X_2,\ldots,X_n$ be i.i.d.~one-dimensional random  variables
with a continuous positive density function $f$. The
random vector
$$
\left(\dfrac{X_1}{S}, \dfrac{X_2}{S},\ldots,\dfrac{X_n}{S}\right),
\quad\mbox{with}\quad S=\Big(\sum_{i=1}^{n} X_i^2\Big)^{1/2},
$$
has a uniform distribution  in $\mathbb{S}^{n-1}\subset\mathbb{R}^n$
if and only if each $X_i$ is a normal random variable with zero
mean.
\end{theorem}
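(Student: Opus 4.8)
The plan is to prove the nontrivial direction (uniform distribution on the sphere forces normality with zero mean), since the converse is cited as known. The key structural observation is that the hypothesis contains a hidden rotational symmetry: the distribution of the normalized vector $(X_1/S,\ldots,X_n/S)$ is uniform on $\mathbb{S}^{n-1}$ precisely when it is invariant under the orthogonal group $O(n)$. My first step would be to translate this into a statement about the joint density of $(X_1,\ldots,X_n)$. Because the $X_i$ are i.i.d.\ with density $f$, the joint density is the product $f(x_1)f(x_2)\cdots f(x_n)$. The direction of a random vector is uniform on the sphere if and only if the law of the vector is invariant under rotations \emph{after} projecting out the radial coordinate; writing the law in spherical coordinates $(r,\omega)$ with $r=|\mathbf{x}|$ and $\omega\in\mathbb{S}^{n-1}$, uniformity of $\omega$ means the density factors as $g(r)\,d\sigma(\omega)$ for some radial function $g$ and the normalized surface measure $d\sigma$.

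The heart of the argument is then to extract a functional equation for $f$. Setting the product density equal to a purely radial function, I would write
\begin{equation*}
f(x_1)f(x_2)\cdots f(x_n) = h\!\left(x_1^2+\cdots+x_n^2\right)
\end{equation*}
for some function $h$, valid for all $\mathbf{x}\in\R^n\setminus\{0\}$ (the positivity of $f$ guarantees we never divide by zero and that the logarithm below is defined). The cleanest route is to pass to logarithms. Writing $\varphi(t)=\log f(\sqrt{t})$ for $t>0$ and $\psi=\log h$, the relation becomes the additive Cauchy-type equation
\begin{equation*}
\varphi(u_1)+\varphi(u_2)+\cdots+\varphi(u_n)=\psi(u_1+\cdots+u_n)
\end{equation*}
in the nonnegative variables $u_i=x_i^2$. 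Specializing and differencing this identity — for instance, fixing all but two coordinates and varying the split of their sum — forces $\varphi$ to satisfy $\varphi(u+v)-\varphi(u)=\text{const}$ increments, i.e.\ $\varphi$ is affine. Here the standing assumption that $f$ is continuous (hence $\varphi$ is continuous) is exactly what rules out the pathological nonmeasurable solutions of Cauchy's equation, so I can conclude $\varphi(t)=a+bt$, which gives $f(x)=e^{a+bx^2}=\alpha\,e^{bx^2}$.

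It remains to pin down the constants. Integrability of $f$ over $\R$ forces $b<0$, so writing $b=-1/(2\sigma^2)$ yields a centered Gaussian, and the normalization $\int f=1$ fixes $\alpha$; thus each $X_i$ is normal with mean zero and some common variance $\sigma^2$. I would note that the mean is automatically zero: the density $f(x)=\alpha e^{bx^2}$ is even, so no nonzero mean can arise, which matches the claim that the $X_i$ must be centered normals. The step I expect to be the main obstacle is making the reduction to the functional equation fully rigorous for \emph{all} $n$ simultaneously and handling the derivation of the Cauchy equation cleanly — in particular justifying that the joint density really is radial (not merely that its angular marginal is uniform) and organizing the differencing argument so that continuity alone, without assuming differentiability of $f$, suffices to force affinity of $\varphi$. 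For $n=2$ this is the classical Maxwell--Herschel theorem, and the general case follows the same template, so I would structure the proof to reduce the $n$-dimensional statement to the two-variable functional equation and then invoke the continuous-solution rigidity of the Cauchy equation.
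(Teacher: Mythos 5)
Your proposal is correct and follows essentially the same route as the paper: reduce the hypothesis to the radial form $f(x_1)\cdots f(x_n)=h(x_1^2+\cdots+x_n^2)$, convert this into a Cauchy-type functional equation in the variables $u_i=x_i^2$, use continuity to force the logarithm to be affine, and finish with integrability, normalization, and evenness to obtain a centered Gaussian. The one step you leave unexecuted and flag as the main obstacle --- justifying that the joint density is genuinely radial --- is exactly where the paper spends the bulk of its proof: it passes to spherical coordinates, reads uniformity as the requirement that for each fixed $r$ the conditional law $g_{\Theta\mid R}$ gives every spherical segment $\Sigma_r$ the probability $\mathcal{S}(\Sigma_r)/\mathcal{S}(\mathbb{S}^{n-1}(r))$, and concludes that $g$ is constant a.e.\ on each sphere; your differencing of the Pexider form, versus the paper's substitution $x_2=\cdots=x_n=0$, is an immaterial variant.
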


\medskip

Curiously, in the case that we cannot assign uniform distributions,
there is an extension of the indifference principle which suggests to
use those distributions that maximize the entropy, i.e. the quantity
$h(f)=-\int_\Omega f(x)\ln(f(x))\mathrm{d}x$ for any given density
$f$. The one-dimensional random variables with continuous
probability density function $f$ on $\Omega=\R$ that maximize the
entropy are again the Gaussian ones, \cite[Thm 3.2]{Conrad}.

Of course, if instead of properties concerning general dynamical
systems one focuses on particular models in which the parameters
have specific restrictions -due to physical or biological  reasons-
one must consider other type of distributions, see for
instance~\cite{SC09}.

Using Theorem \ref{t:normals}, and going back to the initial
motivating  example,  in order to study~(\ref{e:lineal}) we have to
consider the probability space $(\Omega,\mathcal{F},P)$ where
$\Omega=\R^4$, $\mathcal{F}$ is the $\sigma$-algebra generated by
the open sets of $\R^4$ and $P:\mathcal{F}\rightarrow [0,1]$ is the
probability function with density
$\frac{1}{4\pi^2}\mathrm{e}^{-(a^2+b^2+c^2+d^2)/{2}},$ where for
simplicity we take  variance~1  in each marginal density function.

For instance, assume that we want to compute the probability
$\alpha$ of system (\ref{e:lineal}) to have exactly one  eigenvalue with
negative real part. Next, we observe that the probability of having one null eigenvalue is zero. This is because the event which characterizes this possibility is a subset of an event which is itself described
by an algebraic equality between the random variables $A, B, C, D$. This subset has Lebesgue measure zero and therefore, by virtue of the fact that the joint distribution is continuous, the probability of this event, and therefore the event characterizing the null eigenvalue, must also be zero. Thus we have that $\alpha$ coincides 
with the probability of having a saddle (stability index~$1$) at the
origin, i.e. $AD-BC<0$. Then, the open set
$\mathcal{U}:=\{(a,b,c,d)\in\R^4:ad-bc<0\}$ belongs to $\mathcal{F}$
and
$$\alpha=P(AD-BC<0)\,=\,\frac{1}{4\pi^2}\int_{\mathcal{U}}
\mathrm{e}^{-\frac{a^2+b^2+c^2+d^2}{2}}\dr a\,\dr b\,\dr c\,\dr d,$$
which is $1/2$ by symmetry, as we will see.

\begin{proof}[Proof of Theorem \ref{t:normals}] Let $(X_1,\ldots,X_n)$
be the random vector associated with the random variables of the statement, with joint continuous density function
$g(x_1,\ldots,x_n)$. We claim that
\begin{equation}\label{radigen}
g(x_1,\ldots,x_n)=h(x_1^2+\cdots+x_n^2),
\end{equation} for
some continuous function $h$.

Taking spherical coordinates, we consider the new random vector
$(R,\Theta)\in\mathbb{R}^n$ where $R=(X_1^2+\cdots+X_n^2)^{1/2}$ and
$\Theta=(\Theta_1,\ldots,\Theta_{n-1}).$ We have $X_1=R \cos
\Theta_1$, $X_2=R \sin \Theta_1 \cos \Theta_2,\ldots$ $X_{n-1}=R
\sin \Theta_1 \sin \Theta_2 \cdots \sin \Theta_{n-2} \cos
\Theta_{n-1}$ and $X_n=R \sin \Theta_1 \sin \Theta_2 \cdots \sin
\Theta_{n-2} \sin \Theta_{n-1}$. By the change of variables theorem,
the joint density function of $(R,\Theta)$ is {\small
$$g_{R,\Theta}(r,\theta)=g(r\cos(\theta_1),\ldots, r\sin(\theta_1)\cdots\sin(\theta_{n-1}))
\,r^{n-1}\sin^{n-2}(\theta_1)\sin^{n-1}(\theta_2)\cdots\sin(\theta_{n-2})\cdot \bigchi$$}
where $\theta=(\theta_1,\ldots,\theta_{n-1})$, and
$$\bigchi:=
\bigchi_{[0,\infty)}(r)\cdot\,\bigchi_{[0,2\pi)}(\theta_{n-1})\,\cdot\,
\prod\limits_{i=1}^{n-2}\bigchi_{[0,\pi)}(\theta_i),$$ where
$\bigchi_{A}$  stands for the characteristic function of the set
$A.$

The density function of $(R,\Theta)$ conditioned to $R$,
$g_{\Theta|R},$ is
$$g_{\Theta|R}(r,\theta):=\frac{g_{R,\Theta}(r,\theta)}{g_{R}(r)},$$
where $g_{R}(r)$ is the marginal density of $R$:
$$g_{R}(r):=\int_{0}^{\pi}\!\cdots\!\int_{0}^{\pi}\int_{0}^{2\pi}
g(r\cos(\theta_1),\ldots,
r\sin(\theta_1)\cdots\sin(\theta_{n-1}))\,\dr \mathcal{S},$$ where
$\dr \mathcal{S}= r^{n-1}\sin^{n-2}(\theta_1)\sin^{n-1}(\theta_2)
\cdots\sin(\theta_{n-2})\dr\theta_{n-1}\cdots\dr \theta_{1}$ is the
$n$-dimensional surface element in spherical coordinates.

To prove the statement, we need to characterize which are the joint
density functions $g(x_1,\ldots,x_n)$ such that when we fix $R=r$,
the probability on the $(n-1)$-dimensional sphere of radius $r$,
denoted by $\mathbb{S}^{n-1}(r)$, is uniformly distributed. In such
a case the partial spherical segment $\Sigma_r=\{R=r,\,
\theta_i\in[\alpha_i,\beta_i]$ for $i=1,\ldots,n-1\}$ must have
probability $
P(\Sigma_r)={\mathcal{S}(\Sigma_r)}/{\mathcal{S}(\mathbb{S}^{n-1}(r))}
$ where $\mathcal{S}$ denotes the surface area. Set
$\alpha=(\alpha_1,\ldots,\alpha_{n-1})$ and
$\beta=(\beta_1,\ldots,\beta_{n-1})$. Notice that
$$
\mathcal{S}(\Sigma_r)=\int_{\alpha_1}^{\beta_1}\!\cdots\!
\int_{\alpha_{n-1}}^{\beta_{n-1}} \dr
\mathcal{S}=r^{n-1}\,A(\alpha,\beta)
$$ where
$$
A(\alpha,\beta)= \int_{\alpha_1}^{\beta_1}\!\cdots\!
\int_{\alpha_{n-1}}^{\beta_{n-1}}
\sin^{n-2}(\theta_1)\sin^{n-1}(\theta_2)\cdots\sin(\theta_{n-2})\dr\theta_{n-1}\cdots\dr
\theta_{1}$$ and $
\mathcal{S}(\mathbb{S}^{n-1}(r))=\frac{2\pi^\frac{n}{2}}{\Gamma\left(\frac{n}{2}\right)}\,r^{n-1}.
$ Hence, on the one hand,
$$
P(\Sigma_r)=\Gamma\left(\frac{n}{2}\right)\,\frac{A(\alpha,\beta)}{2\pi^\frac{n}{2}},
$$
which does not depend on $r$. On the other hand,
$$
P(\Sigma_r)=\int_{\alpha_1}^{\beta_1}\!\cdots\!
\int_{\alpha_{n-1}}^{\beta_{n-1}} g_{\Theta|R}\,\dr \theta$$ where
$\dr \theta=\dr\theta_{n-1}\cdots\dr \theta_{2}\dr \theta_{1}$. This
implies that 
%{\small
%\begin{multline}
%\int_{\alpha_1}^{\beta_1}\!\cdots\! \int_{\alpha_{n-1}}^{\beta_{n-1}}
%\frac{g(r\cos(\theta_1),\ldots,
%r\sin(\theta_1)\cdots\sin(\theta_{n-1}))\,r^{n-1}
%\sin^{n-2}(\theta_1)\sin^{n-1}(\theta_2)\cdots\sin(\theta_{n-2})\cdot
%\bigchi} {g_R(r)} \,\dr\theta\\
%=\frac{\Gamma\left(\frac{n}{2}\right)}{2\pi^{\frac{n}{2}}}\,
%\int_{\alpha_1}^{\beta_1}\!\cdots\! \int_{\alpha_{n-1}}^{\beta_{n-1}}
%\sin^{n-2}(\theta_1)\sin^{n-1}(\theta_2)\cdots\sin(\theta_{n-2})\dr\theta,
%\end{multline}}
{\small
$$
\int_{\alpha_1}^{\beta_1}\!\cdots\! \int_{\alpha_{n-1}}^{\beta_{n-1}}
\frac{g(r\cos(\theta_1),\ldots,
r\sin(\theta_1)\cdots\sin(\theta_{n-1}))\,r^{n-1}
\sin^{n-2}(\theta_1)\sin^{n-1}(\theta_2)\cdots\sin(\theta_{n-2})\cdot
\bigchi} {g_R(r)} \,\dr\theta$$
$$
=\frac{\Gamma\left(\frac{n}{2}\right)}{2\pi^{\frac{n}{2}}}\,
\int_{\alpha_1}^{\beta_1}\!\cdots\! \int_{\alpha_{n-1}}^{\beta_{n-1}}
\sin^{n-2}(\theta_1)\sin^{n-1}(\theta_2)\cdots\sin(\theta_{n-2})\dr\theta,
$$}
\noindent \!\!for all $\alpha_i,\beta_i\in[0,\pi)$ for $i=1,\ldots,n-2$ with
$\alpha_i<\beta_i$ and $\alpha_{n-2},\beta_{n-2}\in[0,2\pi)$ with
$\alpha_{n-2}<\beta_{n-2}$.  This last  equality implies that almost
everywhere
$$\frac{\Gamma\left(\frac{n}{2}\right)}{2\pi^{\frac{n}{2}}}=\frac{g(r\cos(\theta_1),
\ldots,
r\sin(\theta_1)\cdots\sin(\theta_{n-1}))\,r^{n-1}}{g_{R}(r)},$$ and
therefore $g(r\cos(\theta_1),\ldots,
r\sin(\theta_1)\cdots\sin(\theta_{n-1}))$ is a function that only
depends on $r.$  In consequence, writing this fact in Cartesian
coordinates, we get that almost everywhere
$g(x_1,\ldots,x_n)=h(x_1^2+\cdots+x_n^2),$ for some continuous
function $h$ and the claim~\eqref{radigen} follows.

Now we complete the proof. Since $X_1,\ldots,X_n$ are i.i.d.~with
positive density $f$, we know that $g(x_1,\cdots,x_n)=f(x_1)\ldots
f(x_n).$ So equation (\ref{radigen}) can be expressed as
$$f(x_1)\cdots f(x_n)\,=\,h(x_1^2+\cdots+x_n^2)\quad \mbox{for all}\quad (x_1,\ldots,x_n)\in\R^n$$
where $h$ is a positive function. Taking $x_2=\cdots=x_n=0$ we have that
$f(x_1)\,f(0)^{n-1}=h(x_1^2)$ and
 $h(0)=(f(0))^n>0.$ Thus,
$$f(x_1)\ldots f(x_n)=\frac{h(x_1^2)}{(f(0))^{n-1}}\cdots \frac{h(x_n^2)}{(f(0))^{n-1}}=
\frac{h(x_1^2)}{(f(0))^{n}}\cdots \frac{h(x_n^2)}{(f(0))^{n}}\,(f(0))^{n}=h(x_1^2+\cdots+x_n^2).
$$ Hence, using that $h(0)=(f(0))^n>0,$
$$\frac{h(x_1^2)}{h(0)}\cdots \frac{h(x_n^2)}{h(0)}\,=\frac{h(x_1^2+\cdots+x_n^2)}{h(0)}.
$$
Taking $H(\xi):=h(\xi)/h(0),$ and $u_i=x_i^2$, it holds that
\begin{equation}\label{expgen}
H(u_1)\cdots H(u_n)\,=\,H(u_1+\cdots+u_n)\quad \mbox{with}\quad
H(0)=1.
\end{equation}
Hence, $\varphi(U)=\log (H(u))$ is a continuous function that
satisfies the Cauchy's functional equation
$$
\varphi(u_1)+\cdots +\varphi(u_n)\,=\,\varphi(u_1+\cdots+u_n) \quad
\mbox{with}\quad \varphi(0)=0.
$$
It is well--known that all its continuous solutions are
$\varphi(x)=ax,$ for some $a\in\R.$ Hence all continuous solutions
of (\ref{expgen}) are $H(x)=\mathrm{e}^{ax}.$

As a consequence, $f(x)=b\,\mathrm{e}^{ax^2}$ for some
$(a,b)\in\R^2.$ Since $f$ is a density function, $a<0.$ Moreover,
using $\int_{-\infty}^{\infty}b\mathrm{e}^{ax^2}\dr
x=b\sqrt{-\pi/a}=1$, and setting  $a=-1/(2\sigma^2)$, we get that
$$f(x)=\frac{1}{\sqrt{2\pi \sigma^2}}\,\mathrm{e}^{-\frac{x^2}{2\sigma^2}},$$
so each variable $X_i$ is a  normal random variable $\mathrm{N}(0,\sigma^2).$

The converse part is straightforward and well--known
\cite{Mars,Mull}.
~\end{proof}

\begin{nota} The continuity condition for $f$ in Theorem \ref{t:normals} is
relevant since Equation (\ref{expgen}) also admits non-continuous
solutions that  can be constructed, for instance, from
non-continuous solutions of the Cauchy's functional equation known
for $n=2,$  see~\cite{Jones}.
\end{nota}

\section{A preliminary result and methodology}\label{s:linear}

We will investigate the probabilities of having a certain stability
index for several linear dynamical systems with random coefficients.
In particular we consider:
\begin{itemize}
\item [(a)] Differential systems  $\dot{\mathbf{x}}=A\,\mathbf{x}$  where $\mathbf{x}\in\R^n$ and
$A$ is a real constant $n\times n$ matrix,
\item [(b)] Homogeneous linear differential equation of order $n$
with constant coefficients: $a_nx^{(n)}+a_{n-1}x^{(n-1)}+\cdots+a_1
x'+a_0x=0,$
\item [(c)] Linear discrete systems  $b \,\mathbf{x}_{k+1}=A\,\mathbf{x}_k$ where
$\mathbf{x}_k\in\R^n,$ $b\in\R$; and $A$ is a real constant $n\times
n$ matrix,
\item [(d)] Linear homogeneous difference equation of order $n$ with
constant coefficients $a_nx_{k+n}+a_{n-1}x_{k+n-1}+\cdots+a_1x_{k+1}+a_0x_k=0$.
\end{itemize}

Notice that in the four situations the behaviour of the dynamical
systems does not change if we multiply all the involved constants by
the same positive real number. This fact situates the four problems
in the same context that the motivating example \eqref{e:lineal}.
Hence, following the results of Section \ref{s:probabilityspace}, in
all the cases, we may take the coefficients to be i.i.d.~random normal variables with
zero mean and variance 1.

%\footnote{\color{ForestGreen} Note that if $\mathbf{X}^t=(X_1,\ldots,X_n)$  is a multivariate normal vector such that $X_i$ are i.i.d~$\mathrm{N}(0,\sigma^2)$ distributed
%random variables, then $\mathbf{Y}=D\,\mathbf{X}$ with $D$ the $n\times n$ matrix $D=\mathrm{diag}(1/\sigma,\ldots,1/\sigma)$ is a  multivariate normal vector whose components are  i.i.d.~standard normal random variables.}

Hence in all cases we have a well--defined probability space
$(\Omega,\mathcal{F},P)$, where $\Omega=\R^m,$ with
$m=n^2,n+1,n^2+1$ or $n+1$ according we are in case (a), (b), (c) or
(d), respectively, $\mathcal{F}$ is the $\sigma$-algebra generated
by the open sets and for each $\mathcal{A}\in\mathcal{F}$,
\begin{equation}\label{eq:pa}
P(\mathcal{A})= \frac{1}{(\sqrt{2\pi})^{m}}\,\int_{\mathcal{A}}
\mathrm{e}^{-||\mathbf{a}||^2/2}\dr \mathbf{a},
\end{equation}
where $\mathbf{a}=(a_1,a_2,\ldots,a_m),$
$||\mathbf{a}||^2=\sum_{j=1}^m a_j^2$ and $\dr\mathbf{a}=\dr
a_1\,\dr a_2\ldots\dr a_m.$ For instance the matrices $A$ appearing
in case (a) and (c) are the so called {\it random matrices}.

The use of Routh-Hurwitz algorithm is a very useful tool  to count
the number of roots  of a polynomial with negative real parts  and
it is implemented in many computer algebra systems. These conditions
are given in terms of algebraic inequalities among the coefficients
of the polynomials. Let us recall how to use it to count the number
of roots with modulus less than one of a polynomial  and, hence,  to
obtain the so  called Jury conditions.

Given any polynomial
$
Q(\lambda)=q_n\lambda^n+q_{n-1}\lambda^{n-1}+\cdots+q_1\lambda+q_0
$
with $q_j\in\C,$ by using the conformal transformation
$
\lambda=\frac{z+1}{z-1},
$
we get the associated polynomial
\begin{equation}\label{e:polyestar}
Q^\star(z)=q_n(z+1)^n+q_{n-1}(z+1)^{n-1}(z-1)+\ldots+q_0(z-1)^n.
\end{equation}
It is straightforward to observe that $\lambda_0\in
    \mathbb{C}$ is a root of of $Q(\lambda)$ such that $|\lambda_0|<1$
    if and only if $z_0=(\lambda_0+1)/(\lambda_0-1)$  is a root of
    $Q^\star(z)$ such that $\mathrm{Re}(z_0)<0$.

Hence, because  Routh-Hurwitz and Jury conditions are
 semi-algebraic,
 in all cases the random variable that $X$ that assigns to each
 dynamical system its stability index~$k, 0\le k\le n,$ is
 measurable. Hence $\mathcal{A}_k:=\{\mathbf{a}\in\R^m \,:\, X(\mathbf{a})=k\}\in\mathcal{F}$
 and its probability
 $p_k:=P(\mathcal{A}_k)$ is well--defined. Observe also that the non-hyperbolic cases
 are totally negligible because in their characterization some
 algebraic equalities appear. In this paper we will either calculate or estimate in
 the four situations the values $p_k$ for $k\le10.$

\subsection{A preliminary result}\label{ss:prelim}

In three of the above considered cases we will apply the following
auxiliary result:

\begin{lem}\label{l:lemmageneral}
Let $(\Omega,\mathcal{F},P)$ be a probability space and let
$Y:\Omega\to \mathbb{R}$ be  a discrete random variable with image
$\mathrm{Im}(Y)=\{0,1,\ldots,n\}$, and probability mass function
$p_k=P(Y=k)$ such that $p_k=p_{n-k}$ for all $k=0,\ldots,n$. Then
$E(Y)=\sum_{k=0}^n k p_k=n/2.$ Moreover
\begin{enumerate}
\item[(a)] If $n$ is odd then
$2\sum_{k=0}^{\frac{n-1}{2}} p_k=1.$ In particular, when
$n=1,$ $p_0=p_1=\frac{1}{2}.$
\item[(b)]
If $n$ is even and $n\geq 2$ then
$2\sum_{k=0}^{\frac{n}{2}-1} p_k\,+\, p_{\frac{n}{2}}=1.$
\end{enumerate}

\noindent If, additionally, $n$ is even\footnote{When $n$ is odd the
imposed equalities automatically  hold.} and $\sum_{i\,\mathrm{odd}}
p_i=\sum_{i\,\mathrm{even}} p_i=\frac{1}{2}$, then
\begin{enumerate}

\item[(c)] If $\frac{n}{2}$ is even, then
$ 2\sum_{k=0,\, k\,\mathrm{ even}}^{\frac{n}{2}-2} p_k+
p_{\frac{n}{2}}=\frac{1}{2},$ and $2\sum_{k=1,\, k
\,\mathrm{odd}}^{\frac{n}{2}-1} p_k= \frac{1}{2}.$ In particular,
when $n=4,$ $p_1=p_3=\frac{1}{4},$ $p_2=\frac{1}{2}-2p_0$ and
$p_4=p_0.$

\item[(d)] If $\frac{n}{2}$ is odd, then
$ 2\sum_{k=0,\, k\,\mathrm{ even}}^{\frac{n}{2}-1} p_k=\frac{1}{2},$
and $\sum_{k=1,\, k \,\mathrm{odd}}^{\frac{n}{2}-2} p_k+
p_{\frac{n}{2}}= \frac{1}{2}.$ In particular, when $n=2,$
$p_0=p_2=\frac{1}{4}$ and $p_1=\frac{1}{2}.$
\end{enumerate}

\end{lem}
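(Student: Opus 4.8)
The plan is to exploit the reflection symmetry $k\mapsto n-k$ of the index set together with the normalization $\sum_{k=0}^n p_k=1$ throughout. For the expectation I would reindex directly: writing $E(Y)=\sum_{k=0}^n k\,p_k$ and substituting $p_k=p_{n-k}$ gives $E(Y)=\sum_{k=0}^n k\,p_{n-k}=\sum_{j=0}^n (n-j)\,p_j=n-E(Y)$, whence $E(Y)=n/2$ at once.

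For part (a), when $n$ is odd the $n+1$ indices split into $(n+1)/2$ disjoint reflection pairs $\{k,n-k\}$, none of which is fixed; since $p_k=p_{n-k}$ on each pair, summing all probabilities yields $1=2\sum_{k=0}^{(n-1)/2} p_k$. Specializing to $n=1$ gives $p_0=p_1=1/2$. For part (b), when $n$ is even the single central index $k=n/2$ is fixed by the reflection while the remaining indices form reflection pairs, so $1=p_{n/2}+2\sum_{k=0}^{n/2-1} p_k$.

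The core observation for parts (c) and (d) is that when $n$ is even the reflection $k\mapsto n-k$ preserves the parity of $k$, and hence acts separately on the set of even indices and on the set of odd indices. The supplementary hypothesis supplies the two extra normalizations $\sum_{k\,\mathrm{even}} p_k=\sum_{k\,\mathrm{odd}} p_k=1/2$, so I would simply repeat the pairing argument of (a)/(b) \emph{within each parity class}. The only point requiring care is the location of the unique fixed index $n/2$: it is even precisely when $n/2$ is even (case (c)) and odd precisely when $n/2$ is odd (case (d)). Thus the central term $p_{n/2}$ attaches to the even-class identity in case (c) and to the odd-class identity in case (d), while the complementary parity class contains no fixed point and produces a clean doubled sum.

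Concretely, in case (c) ($n/2$ even) the even indices $\{0,2,\ldots,n\}$ contribute $2\sum_{k=0,\,k\,\mathrm{even}}^{n/2-2} p_k+p_{n/2}=1/2$, and the fixed-point-free odd indices give $2\sum_{k=1,\,k\,\mathrm{odd}}^{n/2-1} p_k=1/2$; in case (d) ($n/2$ odd) the roles of the two parity classes are exchanged. Substituting $n=4$ and then $n=2$ yields the stated closed forms. I expect no genuine obstacle here, since the whole argument is elementary; the one thing to get right is the summation bookkeeping, namely tracking which parity class contains the fixed central index and reading off the correct upper limits $n/2-2$ versus $n/2-1$ in each case.
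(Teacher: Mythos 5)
Your proof is correct and takes essentially the paper's route: the paper establishes $E(Y)=n/2$ by the same symmetry (explicit pairing $kp_k+(n-k)p_{n-k}=np_k$ instead of your one-line reindexing $E(Y)=n-E(Y)$) and then explicitly omits the proofs of items (a)--(d) as straightforward, which is precisely the parity-preserving pairing argument you carry out. One useful byproduct: in case (d) your argument yields $2\sum_{k=1,\,k\,\mathrm{odd}}^{n/2-2}p_k+p_{n/2}=\tfrac{1}{2}$, with the factor $2$ that is genuinely needed (for $n=6$ the odd-class identity is $2p_1+p_3=\tfrac{1}{2}$, matching the paper's own table entry $p_3=\tfrac{1}{2}-2p_1$), whereas the paper's displayed formula in (d) omits this doubling -- a typo that your derivation silently corrects.
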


\begin{proof} We start proving that $E(Y)=n/2.$  Assume for instance that $n$ is odd.
Since $p_k=p_{n-k},$ its holds that $kp_k+(n-k)p_{n-k}= np_k,$ for
each $k\le (n-1)/2.$ Hence,
\begin{align*}
E(Y)&=np_0+np_1+\cdots
+np_{\frac{n-1}{2}}=\frac{n}{2}\left(2p_0+2p_1+\cdots
+2p_{\frac{n-1}{2}}\right)\\
&=\frac{n}{2}\big(
(p_0+p_n)+(p_1+p_{n-1})+\cdots+(p_{\frac{n-1}{2}}+p_{\frac{n+1}{2}})\big)=\frac
n 2.
\end{align*}
When $n$ is even the proof is similar.

The proof of all the four items is straightforward and we omit
it.~\end{proof}

\subsection{Experimental methodology}\label{ss:method}

In all the cases considered in the paper, when we can not give an
exact value of the probabilities $p_k$ we start estimating them by
using the \emph{Monte Carlo} method, see \cite{Morgan}. The estimates obtained 
(namely, the observed relative frequencies) are then improved via the
  \emph{least squares} method, by using the linear
constraints given in Corollaries~\ref{c:corolsistemes},~\ref{c:propoEDOL3}
and~\ref{c:propoEDD}.

 In all the cases we will use Monte Carlo method with $M=10^8$ to obtain an estimation, say $\widetilde p,$ for
a probability $p:=P(\mathcal{A})$ like the one given in equality \eqref{eq:pa} for different measurable sets~$\mathcal
A.$  Further details for each concrete situation are given in
each of the following subsections.

In a few words, recall that $\widetilde p$ is given by
 the proportion of samples that are in $\mathcal{A}$. For studying, for a given $M,$ how close are $p$ and $\widetilde p,$  let $B_j, j=1,\ldots, M,$ i.i.d.~Bernoulli random variables, where each one of them
that takes the value $1$ with probability $p$ and the value $0$ with
probability~$1-p.$ 

Define $P_M=\frac1 M \sum_{j=1}^M B_j.$ Then, the
value obtained for the random variable $P_M,$ $\widetilde p$ is the
approximation of $p$ given by Monte Carlo method. Let us see, by using Chebyshev inequality or
the Central limit theorem, that with very high probability,
$\widetilde p$ is a good approximation of~$p.$

Notice first that $\operatorname{E}(P_M)=p$ and due to the  independence of the
$B_j,$
\[
\operatorname{Var}(P_M)=\operatorname{Var}\left( \frac1 M
\sum_{j=1}^M B_j\right)=\frac1{M^2}M
\operatorname{Var}(B_1)=\frac{p(1-p)}M\le \frac1{4M},
\]
because $p(1-p)\le1/4.$ Recall also that for each $\varepsilon>0$  and any random variable
$X,$ with $\operatorname{E}(X^2)<\infty,$ the  Chebyshev inequality
reads as
\[
P\left(|X-\operatorname{E}(X)|<\varepsilon \right)\ge
1-\frac{\operatorname{Var}(X)}{\varepsilon^2}.
\]
Hence, applying the Chebyshev inequality to $X=P_M$ we get that
\[
P\left(|P_M-p|<\varepsilon \right)\ge 1- \frac{p(1-p)}{M\varepsilon^2}\ge
1-\frac{1}{4M\varepsilon^2}.
\]
 Taking $M=10^8,$  as in our computations, denoting $\widetilde
p=P_{10^8},$ and considering $\varepsilon=10^{-3}$ we get that
\[
P\left(|\widetilde p-p|<10^{-3} \right)\ge
1-\frac1{400}=\frac{399}{400}=0.9975.
\]

Let us see,  by using the Central limit theorem, that the above probability seems to be much bigger. By this theorem we know that for $M$ big
enough, and $p(1-p)M$ also big enough, the distribution of the
random variable
\[
\frac{P_M-\operatorname{E}(P_M)}{\sqrt{\operatorname{Var}(P_M)}}
=\frac{P_M-p}{ \sqrt{\frac{p(1-p)}{M}}}
\]
can be practically considered to be a random variable $Z$ with
distribution $\mathrm{N}(0,1).$ In fact, in Statistics it is usually imposed that $p(1-p)M>18.$ Hence, unless $p$ is very close to~$0$ or~$1$, the value $M=10^{8}$ is big enough. Hence
\begin{align*}
P\left(|P_M-p|<\varepsilon \right)=&P\left(
\frac{\sqrt{M}|P_M-p|}{\sqrt{p(1-p)}}< \frac{\varepsilon
	\sqrt{M}}{\sqrt{p(1-p)}} \right)\simeq P\left(|Z|<
\frac{\varepsilon
	\sqrt{M}}{\sqrt{p(1-p)}} \right)\\
=&2\Phi\left(\frac{\varepsilon
	\sqrt{M}}{\sqrt{p(1-p)}}\right)-1>2\Phi\left(2\varepsilon
	\sqrt{M}\right)-1,
\end{align*}
where $\Phi$ is the distribution function of a $\mathrm{N}(0,1).$ Taking
again $M=10^{8}$ and $\varepsilon=10^{-3}$ or $\varepsilon=2\times 10^{-4}$  we get
\begin{align*}
&P\left(|\widetilde p-p| <10^{-3} \right) \gtrsim2\Phi\left(20\right)-1>1-10^{-88},\\
&P\left(|\widetilde p-p| <2\times10^{-4} \right) \gtrsim2\Phi\left(4\right)-1>0.99993.
\end{align*}

In fact, for instance looking at the values $p_k$ of Table 2  for $n=2$ in Section \ref{ss:LRDS}, that can also be
obtained analytically, we get that $|\widetilde p_k-p_k|\le 6\times
10^{-5},$ for $k=0,1,2.$ So, the actual bound is smaller that the bounds
obtained above.

Finally, and to illustrate how the error decays when the sample size increases, we show the evolution of the errors in one case where the true probabilities are known. We consider the second order difference equation $A_2 x_{k+2}+A_1 x_{k+1}+A_0 x_k=0$ where $A_i$ are i.i.d.~random variables with $\mathrm{N}(0,1)$ distribution. The stability index is given by the number of zeroes with modulus smaller than~$1$ of the characteristic polynomial $Q(\lambda)=A_2 \lambda^2+A_1 \lambda+A_0$. Let $X$ be the random variable that counts the number of roots with modulus smaller than $1$ of $Q(\lambda)$, and $p_k=P(X=k)$ for $k=0,1,2$. The true value of the probabilities $p_k$ is obtained in Corollary \ref{c:propoEDD}. 
Performing Monte Carlo simulations with $M=10^m$ with $m=2,\ldots,10$ we obtain the  observed frequencies
$\widetilde{p}_2(m)$ shown in Table 1. These frequencies are the estimated probabilities for the origin to be asymptotically stable. Notice that in Proposition \ref{p:propoEDiff} and in Corollary \ref{c:propoEDD} we prove that $p_0=p_2=\arctan(\sqrt{2})/\pi$ and, of course, $p_1=1-p_0-p_2={2}\arctan(1/\sqrt{2})/\pi$). For $M=10^m$ we denote the absolute error $e_m=|\widetilde{p}_2(m)-p_2|$:

    \begin{center}
        \begin{tabular}{|l|l|l|}
            \hline
            $M=10^2$& $M=10^3$ & $M=10^4$ \\
            \hline
$\widetilde{p}_2(2)=0.37$ & $\widetilde{p}_2(3)=0.319$& $\widetilde{p}_2(4)=0.3102$  \\
            \hline
$e_2\approx 0.065913276015$ & $e_3\approx 0.014913276015$& $e_4\approx 0.006113276015$  \\
            \hline
            \hline
             $M=10^5$ & $M=10^6$&$M=10^7$ \\
            \hline
  $\widetilde{p}_2(5)=0.30416$ &  $\widetilde{p}_2(6)=0.303892$ &$\widetilde{p}_2(7)=0.3041241$ \\
            \hline
  $e_5\approx 0.000073276015$ &  $e_6\approx 0.000194723985$&$e_7\approx 0.000037376015$ \\
            \hline
            \hline
 $M=10^8$& $M=10^9$  & $M=10^{10}$\\
            \hline
 $\widetilde{p}_2(8)=0.30406079$ & 
$\widetilde{p}_2(9)=0.304076699$ & $\widetilde{p}_2(10)=0.304079098$\\
            \hline
 $e_8\approx 0.000025933985$& $e_9\approx  0.000010024985$  &  $e_{10}\approx 0.000007625985$\\
            \hline
            \end{tabular}  
                                     
        \bigskip

{\bf Table 1.} Observed frequency and absolute error of $p_2$ for second order difference equations, using that $p_2=\arctan(\sqrt{2})/\pi
 \approx 0.304086723985$. 
    \end{center}

With the above results, the regression line of $Y=\log(e_m)$ versus $X=\log(M)=m\log(10)$  is 
$Y=-0.505\,X - 1.260$ with $R^2=0.893$. 
The slope is therefore $-0.505\approx -1/2$ as was expected  a priori since, in practice, the absolute error behaves as
$O(M^{-1/2})$, see the Step 2 in the Introduction.

A more detailed explanation of the second step, about the improvement of the Monte Carlo estimations using the least squares method, is as follows: 
the probabilities $p_k$ satisfy some affine relations, like the ones
in Lemma \ref{l:lemmageneral} or the ones in Proposition \ref{p:propoEDiff} below. Then, if we
denote $\mathbf{p}=(p_0,\ldots,p_n)^t\in\R^{n+1}$ it is possible to
write $\mathbf{p}=\mathrm{M}\mathbf{q}+b$ where  $\mathbf{q}\in\R^k$ with
$k\leq n$ is a vector whose components are different elements of
${p_0,\ldots,p_n}$; $\mathrm{M}\in\mathcal{M}_{n\times k}(\R)$; and $b\in \R^k$.
Let
$\widetilde{\mathbf{p}}=(\widetilde{p}_0,\ldots,\widetilde{p}_n)^t$
be the vector with the estimated probabilities obtained by the observed
relative frequencies using the Monte Carlo method. Then, we can find the
least squares solution \cite[Def.~6.1]{Schott} of the the system,
\begin{equation}\label{e:pMqb}
\widetilde{\mathbf{p}}=\mathrm{M}\widehat{\mathbf{q}}+b,
\end{equation}
which is
\begin{equation}\label{e:leastquaresolution}
\widehat{\mathbf{q}}=(\mathrm{M}^t\cdot \mathrm{M})^{-1}\cdot \mathrm{M}^t\cdot (\widetilde{\mathbf{p}}-b),
\end{equation}
see \cite[p.~198]{DB} or \cite[p.~200]{SB}. So we can find some
improved  estimations $\widehat{\mathbf{p}}$, via
\begin{equation}\label{e:improvedestimations}
\widehat{\mathbf{p}}=\mathrm{M}\widehat{\mathbf{q}}+b.
\end{equation}

\noindent Some detailed examples are given in Sections \ref{ss:LRDS},
\ref{ss:LRODEn} and \ref{ss:LRdifference}.

\section{Linear random differential systems}\label{ss:LRDS}

Consider linear differential systems
$
\dot{\mathbf{x}}=A\,\mathbf{x}\mbox{ where }
\mathbf{x}\in\R^n,\,A\in\mathcal{M}_{n\times n}(\R),
$
where $A$ is a random matrix whose entries are   i.i.d.~random variables with  $\mathrm{N}(0,1)$ distribution.
Let~$X$ be the random variable that  counts the number of eigenvalues of $A$ with negative real part, $s(A).$

\begin{propo}\label{p:proposistemes} With the above notations, set $p_k=P(X=k).$ The following holds:
\begin{enumerate}
\item[(a)] $\sum_{k=0}^n p_k=1.$
\item[(b)] For all $k\in\{0,1,\ldots,n\},$ $p_k=p_{n-k}.$
\item[(c)] $\sum_{i\,\mathrm{odd}} p_i=\sum_{i\,\mathrm{even}} p_i=\frac{1}{2}.$
\end{enumerate}
\end{propo}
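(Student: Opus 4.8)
The plan is to handle the three items by exploiting distributional symmetries of the i.i.d.\ Gaussian matrix $A$ together with elementary facts about eigenvalues of real matrices. Throughout I would work on the full-measure event that $A$ is hyperbolic and invertible, which is legitimate because, as noted earlier in the excerpt, the non-hyperbolic locus lies in an algebraic hypersurface and is thus $P$-negligible. Item (a) then requires nothing: on this event every eigenvalue has nonzero real part, so $s(A)$ is a well-defined integer in $\{0,1,\dots,n\}$ and $\sum_{k=0}^n p_k=P(0\le X\le n)=1$. For item (b) I would use the reflection $A\mapsto -A$: if $\lambda$ is an eigenvalue of $A$ then $-\lambda$ is one of $-A$, so $\re(-\lambda)<0$ exactly when $\re(\lambda)>0$, giving $s(-A)=n-s(A)$ on the hyperbolic event. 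Since the entries of $A$ are i.i.d.\ $\mathrm{N}(0,1)$, hence symmetric about $0$, the matrix $-A$ has the same law as $A$; therefore $X$ and $n-X$ are equidistributed, which is exactly $p_k=p_{n-k}$.

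Item (c) is the substantive one, and the key is to reduce the parity $(-1)^X$ to the sign of a determinant. Because $A$ is real, its non-real eigenvalues occur in conjugate pairs sharing the same real part; such a pair contributes an even amount to $s(A)$ and a positive factor $|\lambda|^2$ to $\det A$. Writing $r_-$ for the number of negative real eigenvalues, this gives simultaneously $(-1)^{s(A)}=(-1)^{r_-}$ and $\sign(\det A)=(-1)^{r_-}$, so that $(-1)^{X}=\sign(\det A)$ almost surely. The claim thus reduces to showing $\operatorname{E}\big[\sign(\det A)\big]=0$, that is, that $\det A$ is equally likely to be positive or negative.

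I would settle this last point with a second symmetry: interchanging two rows of $A$ preserves the i.i.d.\ law of the entries but flips the sign of the determinant, so $\det A$ and $-\det A$ are equidistributed; since $\det A\ne 0$ almost surely, $P(\det A>0)=P(\det A<0)=\tfrac12$. Hence $\sum_i(-1)^i p_i=\operatorname{E}[(-1)^X]=0$, and combining with item (a) yields $\sum_{i\,\mathrm{even}}p_i=\sum_{i\,\mathrm{odd}}p_i=\tfrac12$. The only delicate point is the bookkeeping in item (c): one must check that conjugate pairs contribute evenly to the stability index and positively to the determinant, so that both $(-1)^{s(A)}$ and $\sign(\det A)$ collapse to the same quantity $(-1)^{r_-}$. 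Once this identity is in hand, the two independent uses of Gaussian symmetry---negation for (b) and a row swap for (c)---do all the work, with no need to compute the individual probabilities.
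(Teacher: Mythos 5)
Your proposal is correct and follows essentially the same route as the paper: the negation symmetry $A\mapsto -A$ for item (b), and for item (c) the reduction of the parity of $s(A)$ to $\sign(\det A)$ via the count of negative real eigenvalues, followed by a distributional symmetry giving $P(\det A>0)=\tfrac12$. The only cosmetic difference is that you flip the determinant's sign by swapping two rows (which needs $n\ge 2$, though for $n=1$ item (c) already follows from (a) and (b)), whereas the paper negates a single column.
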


\begin{proof}
The assertion (a) is trivial. To prove (b) we observe that if a
matrix $A$ has $k$ eigenvalues with negative real part, then $B=-A$
has $n-k$ eigenvalues with negative real part. Calling $q_m$ the
probability that  $B$ has $m$ eigenvalues with negative real part,
we get that $p_m=q_m.$ This is so, because if $X\sim\mathrm{N}(0,1)$
then $-X\sim\mathrm{N}(0,1)$ and as a consequence the entries of $A$
and $B$ have the same distribution. Then, $q_k=p_{n-k}$ and the
result follows.

To see (c) we claim  that $s(A)$ is even if and only if the
determinant of $A$  is positive and, moreover,  $P(\det (A)>0)=1/2.$
From this claim we get the result because
 $\sum_{i\,\text{even}}\,p_i$ is the
probability of $s(A)$ being even. To prove the first part of the
claim notice first that we can assume that
$0\ne\det(A)=\lambda_1\lambda_2\cdots \lambda_n,$ where
$\lambda_1,\lambda_2,\ldots ,\lambda_n$ are the $n$ eigenvalues of
$A.$ We write $\lambda_1\lambda_2\cdots
\lambda_n=(\lambda_1\lambda_2\cdots
\lambda_k)(\lambda_{k+1}\lambda_{k+2}\cdots \lambda_n)$ where
$\lambda_1,\lambda_2,\ldots ,\lambda_k$ are all the real negative
eigenvalues. Observe also that for complex eigenvalues
$\lambda\bar\lambda>0.$ Hence $\lambda_{k+1}\lambda_{k+2}\cdots
\lambda_n>0,$ $\sign(\det(A))=(-1)^k$ and the condition that $s(A)$ is even
is characterized by $\det(A)>0.$ To prove that $P(\det(A)>0)=1/2$
  note that if $B$ is the matrix obtained by changing the
sign of one column of $A$ then $\det(A)\cdot \det(B)<0$ and
hence $P(\det(A)<0)= P(\det(B)>0).$ Furthermore, since
the entries of $A$ and $B$ have the same  distribution we have
$P(\det(B)>0)= P(\det(A)>0)$, thus
$P(\det(A)<0)=P(\det(A)>0)=1/2.$
~\end{proof}

From the above proposition it easily follows:

\begin{corol}\label{c:corolsistemes}
Consider $\dot{\mathbf{x}}=A\,\mathbf{x},$ $\mathbf{x}\in\R^n$ with
$A\in\mathcal{M}_{n\times n}(\R)$ a random matrix with i.i.d.~$\mathrm{N}(0,1)$ entries, let $X$ be the
random variable  defined above and  $p_k=P(X=k)$. Then the probabilities $p_k$
satisfy all the consequences of Lemma \ref{l:lemmageneral}. In particular $E(X)=n/2.$
\end{corol}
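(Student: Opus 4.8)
The plan is to verify that the random variable $X$ meets every hypothesis of Lemma \ref{l:lemmageneral} and then simply read off its conclusions. First I would note the structural requirement: since $X$ counts the eigenvalues of $A$ with negative real part, it is a discrete random variable whose image is contained in $\{0,1,\ldots,n\}$, so its mass function $p_k=P(X=k)$ is well defined on that set. This is all the lemma needs on this front, since the values need not all be attained with positive probability.

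The core hypothesis of Lemma \ref{l:lemmageneral} is the symmetry $p_k=p_{n-k}$ for all $k$, and this is precisely Proposition \ref{p:proposistemes}(b). Feeding it into the lemma immediately yields $E(X)=n/2$ together with items (a) and (b), regardless of the parity of $n$. To obtain the remaining items (c) and (d), the lemma requires the additional constraint (relevant only when $n$ is even) that the odd-indexed and even-indexed probabilities each sum to $\tfrac12$; this is exactly Proposition \ref{p:proposistemes}(c), proved there unconditionally. When $n$ is odd those equalities hold automatically from the symmetry, as the lemma's footnote records, so no extra input is needed in that case.

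There is essentially no obstacle: all the probabilistic work was already done in Proposition \ref{p:proposistemes}, and the corollary is the bookkeeping step that matches its three parts to the hypotheses of Lemma \ref{l:lemmageneral}. The only point meriting a moment's care is confirming that the even-$n$ side conditions of the lemma are supplied, which Proposition \ref{p:proposistemes}(c) does. With both hypotheses in hand, every consequence of Lemma \ref{l:lemmageneral} transfers verbatim to the present probabilities $p_k$, whence the statement follows.
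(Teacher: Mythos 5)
Your proposal is correct and matches the paper's own (implicit) argument: the paper presents this corollary as an immediate consequence of Proposition \ref{p:proposistemes}, with parts (b) and (c) supplying exactly the symmetry hypothesis and the odd/even half-sum hypothesis of Lemma \ref{l:lemmageneral}, which is precisely the bookkeeping you carry out. Nothing further is needed.
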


\medskip

Now we reproduce some experiments to estimate the probabilities
$p_k$ for low dimensional  cases. We apply the Monte Carlo method,
that is, for each considered dimension $n$, we have generated $10^8$
matrices $A\in\mathcal{M}_{n\times n}(\R)$ whose entries are
pseudo-random numbers simulating the realizations on $n^2$
independent random variables with $\mathrm{N}(0,1)$ distribution.
For each matrix $A$ we have computed the characteristic polynomial,
and counted the number of eigenvalues with negative real part by
using the Routh-Hurwitz zeros counter \cite[p.~1076]{GR}.  We are aware that the stability of the calculation of the coefficients of the characteristic polynomial from the 
entries of a matrix is critical (see [SB, p.378-379] and references therein); 
however we have only used this calculation for low dimensions, namely $n \leq 4$.
For
$n\geq 5$, and in order to decrease the computation time, we have
directly computed numerically the eigenvalues of $A$ and counted the number of
them with negative real part.

For each considered dimension of the phase space $n$, and  in order
to take  advantage of the relations stated in Corollary
\ref{c:corolsistemes}, we can refine the solutions using the least
squares solutions of the inconsistent linear system associated with
these relations when using the observed frequencies obtained by the
Monte Carlo simulation.

We give details of one example. Set $n=7$, for instance. By Corollary
\ref{c:corolsistemes}  we have $p_3=p_4=\frac{1}{2}-p_0-p_1-p_2$;
$p_5=p_2$; $p_6=p_1$ and $p_7=p_0$. So, using the notation
introduced in Section~\ref{ss:method}, we can write
$\mathbf{p}=\mathrm{M}\mathbf{q}+b$, where $\mathbf{p}^t=(p_0,\ldots,p_7)$;
$$
\mathrm{M}=\left(\begin{array}{ccc}
1 & 0 & 0\\
0 & 1 & 0\\
0 & 0 & 1\\
-1& -1 & -1\\
-1& -1 & -1\\
0 & 0 & 1\\
0 & 1 & 0\\
1 & 0 & 0
\end{array}\right); \quad \mathbf{q}=\left(\begin{array}{c}
p_0\\
p_1\\
p_2
\end{array}\right); \,\mbox{ and } b=\left(\begin{array}{c}
0\\
0\\
0\\
\frac{1}{2}\\
\frac{1}{2}\\
0\\
0\\
0
\end{array}\right).
$$
The observed relative frequencies in our Monte Carlo simulation are
{\small
$$\widetilde{\mathbf{p}}^t=\left(\frac{31643}{50000000},\frac{261137}{12500000},
\frac{7124967}{50000000},\frac{1344047}{4000000},\frac{33597117
}{100000000},\frac{14248187 }{100000000},\frac{1043913
}{50000000},\frac{63379 }{100000000}\right).$$} By finding the least
squares solution of the system \eqref{e:pMqb} (\cite[p.~198]{DB} or
\cite[p.~200]{SB}), given by \eqref{e:improvedestimations}, we
obtain {\small $$\widehat{\mathbf{p}}^t= \left(
{\frac{25333}{40000000}},{\frac{
2088461}{100000000}},{\frac{28498121}{200000000}},{\frac{16799573}{
50000000}},{\frac{16799573}{50000000}},{\frac{28498121}{200000000}},{
\frac{2088461}{100000000}},{\frac{25333}{40000000}}
 \right).$$ }
The other cases follow similarly.

\medskip

We summarize the results of our experiments in the Table~2, where
the observed  relative frequencies and the estimates are presented
only up to the fifth decimal  (in the table, and in the whole paper,
frequency stands for relative frequency) because as we already
explained in the introduction, the predicted absolute error will be
of order $10^{-4}$. Observe that in the cases $n=1,2$ the true probabilities are known. We include the results of the Monte Carlo simulations for completeness, but  it makes no sense to apply the least squares method.

\begin{center}
\begin{tabular}{|l|l|l|l|}
\hline
Dimension & Observed frequency &  Least squares & Relations (Corol. \ref{c:corolsistemes})\\
\hline
\hline
$n=1$ & $\widetilde{p}_0=0.49996$ & &$p_0=0.5$\\
{}  & $\widetilde{p}_1=0.50004$ &   &$p_1=0.5$\\
\hline
$n=2$ & $\widetilde{p}_0=0.24999$ & &$p_0=0.25$\\
{}  & $\widetilde{p}_1=0.50006$ &   &$p_1=0.5$\\
{}  & $\widetilde{p}_2=0.24995$ &   &$p_2=0.25$\\
\hline
$n=3$ & $\widetilde{p}_0=0.10447$ &  $\widehat{p}_0=0.10450$ &$p_0$\\
{}  & $\widetilde{p}_1=0.39542$ &  $\widehat{p}_1=0.39550$ &$p_1=\frac{1}{2}-p_0$\\
{}  & $\widetilde{p}_2=0.39557$ &  $\widehat{p}_2=0.39550$ &$p_2=\frac{1}{2}-p_0$\\
{}  & $\widetilde{p}_3=0.10454$ &  $\widehat{p}_3=0.10450$ &$p_3=p_0$\\
\hline
$n=4$ & $\widetilde{p}_0=0.03722$ &  $\widehat{p}_0=0.03721$ &$p_0$\\
{}  & $\widetilde{p}_1=0.25009$ &  $\widehat{p}_1=0.25000$ &$p_1=\frac{1}{4}$\\
{}  & $\widetilde{p}_2=0.42556$ &  $\widehat{p}_2=0.42558$ &$p_2=\frac{1}{2}-2p_0$\\
{}  & $\widetilde{p}_3=0.24998$ &  $\widehat{p}_3=0.25000$ &$p_3=\frac{1}{4}$\\
{}  & $\widetilde{p}_4=0.03715$ &  $\widehat{p}_4=0.03721$ &$p_4=p_0$\\
\hline
$n=5$ & $\widetilde{p}_0=0.01126$ & $\widehat{p}_0=0.01126$ &  $p_0$\\
{}  & $\widetilde{p}_1=0.13028$ & $\widehat{p}_1=0.13024$ &  $p_1$\\
{}  & $\widetilde{p}_2=0.35848$ & $\widehat{p}_2=0.35850$ &  $p_2=\frac{1}{2}-p_0-p_1$\\
{}  & $\widetilde{p}_3=0.35852$ & $\widehat{p}_3=0.35850$ &  $p_3=\frac{1}{2}-p_0-p_1$\\
{}  & $\widetilde{p}_4=0.13020$ & $\widehat{p}_4=0.13024$ &  $p_4=p_1$\\
{}  & $\widetilde{p}_5=0.01126$ & $\widehat{p}_5=0.01126$ &  $p_5=p_0$\\
\hline
\end{tabular}
\end{center}

\begin{center}
\begin{tabular}{|l|l|l|l|}
\hline
Dimension & Observed frequency &  Least squares & Relations (Corol. \ref{c:corolsistemes})\\
\hline
\hline
$n=6$ & $\widetilde{p}_0=0.00289$ & $\widehat{p}_0=0.00288$ &  $p_0$\\
{}  & $\widetilde{p}_1=0.05675$ & $\widehat{p}_1=0.05678$ &  $p_1$\\
{}  & $\widetilde{p}_2=0.24710$ & $\widehat{p}_2=0.24712$ &  $p_2=\frac{1}{4}-p_0$\\
{}  & $\widetilde{p}_3=0.38642$ & $\widehat{p}_3=0.38644$ &  $p_3=\frac{1}{2}-2p_1$\\
{}  & $\widetilde{p}_4=0.24714$ & $\widehat{p}_4=0.24712$ &  $p_4=\frac{1}{4}-p_0$\\
{}  & $\widetilde{p}_5=0.56810$ & $\widehat{p}_5=0.05678$ &  $p_5=p_1$\\
{}  & $\widetilde{p}_6=0.00289$ & $\widehat{p}_6=0.00288$ &  $p_6=p_0$\\
\hline
$n=7$ & $\widetilde{p}_0=0.00063$ & $\widehat{p}_0=0.00063$ &  $p_0$\\
{}  & $\widetilde{p}_1=0.02089$ & $\widehat{p}_1=0.02088$ &  $p_1$\\
{}  & $\widetilde{p}_2=0.14250$ & $\widehat{p}_2=0.14249$ &  $p_2$\\
{}  & $\widetilde{p}_3=0.33601$ & $\widehat{p}_3=0.33600$ &  $p_3=\frac{1}{2}-p_0-p_1-p_2$\\
{}  & $\widetilde{p}_4=0.33597$ & $\widehat{p}_4=0.33600$ &  $p_4=\frac{1}{2}-p_0-p_1-p_2$\\
{}  & $\widetilde{p}_5=0.14248$ & $\widehat{p}_5=0.14249$ &  $p_5=p_2$\\
{}  & $\widetilde{p}_6=0.02088$ & $\widehat{p}_6=0.02088$ &  $p_6=p_1$\\
{}  & $\widetilde{p}_7=0.00063$ & $\widehat{p}_7=0.00063$ &  $p_7=p_0$\\
\hline
$n=8$ & $\widetilde{p}_0=0.00012$ & $\widehat{p}_0=0.00012$ &  $p_0$\\
{}  & $\widetilde{p}_1=0.00651$ & $\widehat{p}_1=0.00650$ &  $p_1$\\
{}  & $\widetilde{p}_2=0.06948$ & $\widehat{p}_2=0.06948$ &  $p_2$\\
{}  & $\widetilde{p}_3=0.24356$ & $\widehat{p}_3=0.24350$ &  $p_3=\frac{1}{4}-p_1$\\
{}  & $\widetilde{p}_4=0.36080$ & $\widehat{p}_4=0.36080$ &  $p_4=\frac{1}{2}-2p_0-2p_2$\\
{}  & $\widetilde{p}_5=0.24346$ & $\widehat{p}_5=0.24350$ &  $p_5=\frac{1}{4}-p_1$\\
{}  & $\widetilde{p}_6=0.06946$ & $\widehat{p}_6=0.06948$ &  $p_6=p_2$\\
{}  & $\widetilde{p}_7=0.00650$ & $\widehat{p}_7=0.00650$ &  $p_7=p_1$\\
{}  & $\widetilde{p}_8=0.00012$ & $\widehat{p}_8=0.00012$ &  $p_8=p_0$\\
\hline
$n=9$ & $\widetilde{p}_0=0.00002$ & $\widehat{p}_0=0.00002$ &  $p_0$\\
{}  & $\widetilde{p}_1=0.00171$ & $\widehat{p}_1=0.00171$ &  $p_1$\\
{}  & $\widetilde{p}_2=0.02880$ & $\widehat{p}_2=0.02879$ &  $p_2$\\
{}  & $\widetilde{p}_3=0.14952$ & $\widehat{p}_3=0.14955$ &  $p_3$\\
{}  & $\widetilde{p}_4=0.31987$ & $\widehat{p}_4=0.31993$ &  $p_4=\frac{1}{2}-p_0-p_1-p_2-p_3$\\
{}  & $\widetilde{p}_5=0.31999$ & $\widehat{p}_5=0.31993$ &  $p_5=\frac{1}{2}-p_0-p_1-p_2-p_3$\\
{}  & $\widetilde{p}_6=0.14958$ & $\widehat{p}_6=0.14955$ &  $p_6=p_3$\\
{}  & $\widetilde{p}_7=0.02878$ & $\widehat{p}_7=0.02879$ &  $p_7=p_2$\\
{}  & $\widetilde{p}_8=0.00171$ & $\widehat{p}_8=0.00171$ &  $p_8=p_1$\\
{}  & $\widetilde{p}_9=0.00002$ & $\widehat{p}_9=0.00002$ &  $p_9=p_0$\\
\hline
\end{tabular}
\end{center}

\begin{center}
\begin{tabular}{|l|l|l|l|}
\hline
Dimension & Observed frequency &  Least squares & Relations (Corol. \ref{c:corolsistemes})\\
\hline
\hline
$n=10$ & $\widetilde{p}_0=0$ & $\widehat{p}_0=0$ &  $p_0$\\
{}  & $\widetilde{p}_1=0.00038$ & $\widehat{p}_1=0.00038$ &  $p_1$\\
{}  & $\widetilde{p}_2=0.01015$ & $\widehat{p}_2=0.01015$ &  $p_2$\\
{}  & $\widetilde{p}_3=0.07850$ & $\widehat{p}_3=0.07850$ &  $p_3$\\
{}  & $\widetilde{p}_4=0.23987$ & $\widehat{p}_4=0.23985$ &  $p_4=\frac{1}{4}-p_0-p_2$\\
{}  & $\widetilde{p}_5=0.34224$ & $\widehat{p}_5=0.34224$ &  $p_5=\frac{1}{2}-2p_1-2p_3$\\
{}  & $\widetilde{p}_6=0.23984$ & $\widehat{p}_6=0.23985$ &  $p_6=\frac{1}{4}-p_0-p_2$\\
{}  & $\widetilde{p}_7=0.07849$ & $\widehat{p}_7=0.07850$ &  $p_7=p_3$\\
{}  & $\widetilde{p}_8=0.01015$ & $\widehat{p}_8=0.01015$ &  $p_8=p_2$\\
{}  & $\widetilde{p}_9=0.00038$ & $\widehat{p}_9=0.00038$ &  $p_9=p_1$\\
{}  & $\widetilde{p}_{10}=0$ & $\widehat{p}_{10}=0$ &  $p_{10}=p_0$\\
\hline
\end{tabular}

%\end{sidewaystable}

\bigskip

{\bf Table 2.} Linear stability indexes for linear random differential systems.
\end{center}

\section{Linear random differential equations of order~$n$}\label{ss:LRODEn}
In this section we consider linear random homogeneous differential equations of order $n$
\begin{equation}\label{eqdiflineal}
A_n x^{(n)}+ A_{n-1} x^{(n-1)}+ \cdots + A_2 x''+A_1 x'+A_0
x=0,\end{equation} where $x=x(t)$, the derivatives are taken in
respect to $t$, and $A_j$ are again
 i.i.d.~random variables with $\mathrm{N}(0,1)$ distribution.

To get the stability index for these differential equations, we need we only need to know the probability distributions of the number of
roots with negative real part  of its associated random characteristic polynomial:
$$
Q(\lambda)=A_n\lambda^{n}+A_{n-1}\lambda^{n-1}+\cdots
+A_1\lambda+A_0.$$ Let  $X$ be the random variable that
counts the number of roots of $Q(\lambda)$ with negative real parts
and define $p_k=P(X=k)$ for $k=0,1,\ldots,n.$

\begin{propo}\label{p:propoEDOL}
Set $p_k=P(X=k),$ where $X$ is the  random variable defined above.
Then
\begin{enumerate}
\item[(a)] $\sum_{k=0}^n p_k=1.$
\item[(b)] For all $k\in\{0,1,\ldots,n\},$ $p_k=p_{n-k}.$
\item[(c)] $\sum_{i\,\mathrm{odd}} p_i=\sum_{i\,\mathrm{even}} p_i=\frac{1}{2}.$
\end{enumerate}
\end{propo}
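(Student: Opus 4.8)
The three statements mirror exactly those of Proposition~\ref{p:proposistemes}, with the random matrix $A$ replaced by the random characteristic polynomial $Q$ and the role of $\det(A)$ played by the product of the roots of $Q$; so the plan is to adapt that argument step by step. Part~(a) is immediate: the events $\{X=k\}$ for $k=0,\ldots,n$ partition $\Omega=\R^{n+1}$ up to the null set on which $Q$ has a purely imaginary root (or a vanishing leading coefficient $A_n$). This set is contained in an algebraic variety, hence has zero Gaussian measure, exactly as in the reasoning already used in Section~\ref{s:probabilityspace}.

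For part~(b) the key observation is that negating the argument of $Q$ negates all of its roots. Concretely, I would introduce the transformation $T:(A_0,A_1,\ldots,A_n)\mapsto\big((-1)^0A_0,(-1)^1A_1,\ldots,(-1)^nA_n\big)$ on $\Omega$. Since each $A_j\sim\mathrm{N}(0,1)$ and $-A_j\sim\mathrm{N}(0,1)$ as well, the map $T$ preserves the joint distribution. Moreover the polynomial whose coefficients are $\big((-1)^jA_j\big)_j$ equals $\sum_j A_j(-\lambda)^j=Q(-\lambda)$, whose roots are the negatives of those of $Q$; a root with negative real part is thus turned into one with positive real part. Hence the stability counter evaluated after applying $T$ equals $n-X$, and because $T$ preserves the measure we obtain $n-X\stackrel{d}{=}X$, that is $p_k=p_{n-k}$.

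Part~(c) is where the real work lies, and I would reproduce the determinant argument of Proposition~\ref{p:proposistemes} with Vieta's formulas in place of the determinant. First I would check that $X$ and the number $k$ of strictly negative real roots of $Q$ have the same parity: the remaining roots are either positive reals (contributing nothing modulo~$2$) or non-real roots, which come in conjugate pairs and so contribute an even amount to $X$. Next, the sign of the product of all roots is $(-1)^k$, since each conjugate pair contributes $\lambda\bar\lambda>0$ and each positive real root a positive factor; by Vieta this product equals $(-1)^nA_0/A_n$. Therefore $X$ is even if and only if $(-1)^nA_0/A_n>0$, an event determined solely by the sign of $A_0A_n$. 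Finally, since $A_0$ and $A_n$ are independent $\mathrm{N}(0,1)$ variables, $P(A_0A_n>0)=P(A_0A_n<0)=\tfrac12$, so for both parities of $n$ one gets $P(X\text{ even})=\tfrac12$, whence $\sum_{i\,\mathrm{even}}p_i=\sum_{i\,\mathrm{odd}}p_i=\tfrac12$.

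The main obstacle is the parity bookkeeping in part~(c): one must keep careful track of the fact that $X$ counts \emph{all} roots with negative real part (complex ones included), whereas the sign of the product of roots records only the count of negative \emph{real} roots, and one must verify that the leftover factor $(-1)^n/A_n$ does not spoil the final value $\tfrac12$. Once the reduction to the sign of $A_0A_n$ is in place, the symmetry of the Gaussian concludes the proof just as in the matrix case.
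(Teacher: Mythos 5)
Your proposal is correct and follows essentially the same route as the paper: part (b) via the sign-flipped coefficient vector, i.e.\ the polynomial $Q(-\lambda)$, whose coefficients $(-1)^jA_j$ have the same joint Gaussian law, and part (c) via the parity argument reducing ``$X$ even'' to the sign of the product of the roots and hence to the sign of $A_0A_n$, exactly as in the paper's adaptation of Proposition~\ref{p:proposistemes}. If anything, your bookkeeping in (c) is slightly more careful than the paper's: the paper states the criterion as ``$X$ odd iff $A_0A_n<0$'' for all $n$, whereas by Vieta the correct event is $(-1)^nA_0/A_n<0$, which flips with the parity of $n$; since both events have probability $\tfrac12$ by the symmetry and independence of $A_0$ and $A_n$, the conclusion is unaffected.
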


\begin{proof} The proof of (a) is trivial. To prove (b) consider
equation (\ref{eqdiflineal}) with its characteristic
polynomial $Q(\lambda)$ and also the new differential equation
\begin{equation}\label{eqdiflinealprima}
(-1)^nA_n x^{(n)}+(-1)^{n-1} A_{n-1} x^{(n-1)}+ \cdots + A_2 x''-A_1
x'+A_0 x=0\end{equation}  with its characteristic polynomial
$Q^*(\lambda)=Q(-\lambda)=(-1)^nA_n\lambda^{n}+
(-1)^{n-1}A_{n-1}\lambda^{n-1}+\cdots -A_1\lambda+A_0.$ Since
$Q(\lambda)=0$ if and only if $Q^*(-\lambda)=0$ we get that
$p_k=p^*_{n-k}$ where $p^*_i$ the probability that $Q^*(\lambda)$
has $i$ roots with negative real part. But also $p_k=p^*_k$ because
the coefficients of the equations (\ref{eqdiflineal}) and
(\ref{eqdiflinealprima}) have the same distributions. Hence, the
result follows.

Similarly, as in the proof of $(c)$ of Proposition \ref{p:proposistemes}, we
observe that the polynomial $Q(\lambda)$ has an odd number of roots
with negative real part if and only if $A_0\cdot A_n<0,$ because we can neglect the case of having
some roots with zero real part.
Since the
coefficients of~(\ref{eqdiflineal}) are symmetric independent random variables, the probability
that $Q(\lambda)$ has an odd number of roots with negative real
part is
$$
P(\{A_0>0\}\cap\, \{A_n<0\})+P(\{A_0<0\}\cap\, \{A_n>0\})=\frac12\times\frac12+\frac12\times\frac12=\frac12.
$$
\end{proof}

\begin{corol}\label{c:propoEDOL3}
    Consider the linear random homogeneous differential equation of
    order $n$ \eqref{eqdiflineal}, with all $A_i$ being
    i.i.d.~$\mathrm{N}(0,1)$ random variables,   let $X$ be
    defined above, and set $p_k=P(X=k).$ Then the probabilities $p_k$
    satisfy all the conclusions of Lemma \ref{l:lemmageneral}. In particular $E(X)=n/2.$
\end{corol}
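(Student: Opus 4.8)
The goal is to prove Corollary~\ref{c:propoEDOL3}, which asserts that the probabilities $p_k$ attached to the random characteristic polynomial $Q(\lambda)$ satisfy every conclusion of Lemma~\ref{l:lemmageneral}. The plan is to verify that the hypotheses of Lemma~\ref{l:lemmageneral} hold in this setting and then simply invoke that lemma; all the affine relations and the expected value $E(X)=n/2$ follow as immediate consequences, with no further computation needed.

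First I would observe that $X$, the number of roots of $Q(\lambda)$ with negative real part, is a discrete random variable taking values in $\{0,1,\ldots,n\}$, since a degree-$n$ polynomial has exactly $n$ roots counted with multiplicity (the leading coefficient $A_n$ vanishes only on a measure-zero set, so $\deg Q = n$ almost surely). Its probability mass function is precisely $p_k=P(X=k)$, so the random variable fits the template of Lemma~\ref{l:lemmageneral}. The two structural hypotheses required by that lemma are the symmetry $p_k=p_{n-k}$ for all $k$ and the parity-balance condition $\sum_{i\,\mathrm{odd}} p_i=\sum_{i\,\mathrm{even}} p_i=\tfrac12$. Both of these are exactly the content of parts (b) and (c) of Proposition~\ref{p:propoEDOL}, which has just been established. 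Thus the hypotheses are met directly, and I would state this matching explicitly.

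With the hypotheses in place, the conclusion is automatic: Lemma~\ref{l:lemmageneral} yields $E(X)=\sum_{k=0}^n k\,p_k=n/2$, together with the affine relations in parts (a)--(d) of that lemma (the basic summation constraints from symmetry, and the refined even/odd splittings that hold because $\sum_{i\,\mathrm{odd}} p_i=\sum_{i\,\mathrm{even}} p_i=\tfrac12$). Since Proposition~\ref{p:propoEDOL}(c) supplies precisely the additional parity hypothesis that items (c) and (d) of the lemma require, no case is left unverified and every conclusion transfers.

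There is essentially no obstacle here, which is the point worth flagging: the substance of the argument was already done in proving Proposition~\ref{p:propoEDOL}, whose parts (b) and (c) rely on the symmetry of the Gaussian coefficients (replacing $Q(\lambda)$ by $Q(-\lambda)$, and tracking the sign of $A_0\cdot A_n$). The only thing to be careful about is the almost-sure degree and the negligibility of roots with zero real part, so that $X$ is genuinely $\{0,\ldots,n\}$-valued and the even/odd characterization via $A_0\cdot A_n$ is valid off a null set; these points were already noted in the proof of Proposition~\ref{p:propoEDOL}. Hence the corollary is a direct corollary in the literal sense, and the proof reduces to citing Proposition~\ref{p:propoEDOL} and Lemma~\ref{l:lemmageneral}.
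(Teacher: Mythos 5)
Your proof is correct and follows exactly the paper's (implicit) argument: the corollary is a direct application of Lemma~\ref{l:lemmageneral}, whose hypotheses (the symmetry $p_k=p_{n-k}$ and the parity condition $\sum_{i\,\mathrm{odd}}p_i=\sum_{i\,\mathrm{even}}p_i=\frac12$) are supplied by parts (b) and (c) of Proposition~\ref{p:propoEDOL}. Your additional remarks on the almost-sure degree $n$ and the negligibility of roots with zero real part are sound and consistent with the paper's treatment.
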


For each $n,$ let $r_n$ be  the probability of the origin to be a
global stable attractor (asymptotically stable equilibrium) for
\eqref{eqdiflineal}, that is $r_n=p_n.$ By Proposition
\ref{p:propoEDOL}(b) this probability coincides with the probability of
being a repeller because $p_n=p_0.$ Our results in
Proposition~\ref{p:nova}  seem to indicate that $r_n$  decreases
with $n.$ Before proving this proposition we need a preliminary
result.

\begin{lem}\label{l:lemmaauxq}
    Let $U,V,S$ and $T$ be i.i.d.~random variables with standard  normal  distribution. Then
    $p^+:=P(U>0;V>0;S>0;T>0; UT-SV>0)=1/32$.
\end{lem}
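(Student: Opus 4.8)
The plan is to avoid evaluating the five-variable Gaussian integral directly and instead reduce everything to a symmetry. First I would peel off the positivity constraints: since $U,V,S,T$ are i.i.d.~and each is positive with probability $1/2$, the event $\{U>0,\,V>0,\,S>0,\,T>0\}$ has probability $(1/2)^4=1/16$. Thus it suffices to show that, conditioned on this event, the sign of $UT-SV$ is equidistributed, i.e.
$$P\big(UT-SV>0 \,\big|\, U>0,\,V>0,\,S>0,\,T>0\big)=\tfrac12,$$
whereupon $p^+=\tfrac1{16}\cdot\tfrac12=\tfrac1{32}$ follows at once.

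The cleanest way to obtain this conditional probability $1/2$ is to exploit the permutation invariance of the i.i.d.~vector $(U,V,S,T)$. Consider the involution $(U,V,S,T)\mapsto(S,T,U,V)$: because the four variables are i.i.d., this map preserves their joint distribution; it clearly leaves the positivity event invariant; and it sends $UT-SV$ to $SV-UT=-(UT-SV)$. Consequently
$$P\big(\{U,V,S,T>0\}\cap\{UT-SV>0\}\big)=P\big(\{U,V,S,T>0\}\cap\{UT-SV<0\}\big),$$
so the two events split the probability $1/16$ evenly, giving $p^+=1/32$.

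As a sanity check one can see the same fact more probabilistically: conditioned on all four variables being positive, $(U,T)$ and $(S,V)$ involve disjoint subsets of the i.i.d.~family and are therefore independent, while $U,T,S,V$ become four i.i.d.~half-normal variables. Hence the products $UT$ and $SV$ are themselves independent and identically distributed, and for any two i.i.d.~continuous random variables $Y,Z$ one has $P(Y>Z)=P(Z>Y)$ with $P(Y=Z)=0$; applying this with $Y=UT$, $Z=SV$ recovers the conditional probability $1/2$.

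I do not anticipate a genuine obstacle here: the whole content is the observation that the determinant-type quantity $UT-SV$ is antisymmetric under a distribution-preserving involution. The single point deserving a word of care is the tie event $\{UT=SV\}$, which must have probability zero for the two halves to sum exactly to $1/16$; this holds because the joint density of $(U,V,S,T)$ is continuous and $\{ut=sv\}$ is an algebraic hypersurface of zero Lebesgue measure, precisely the ``non-hyperbolic cases are negligible'' reasoning already invoked earlier in the paper.
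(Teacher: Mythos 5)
Your proof is correct, and its skeleton matches the paper's: both decompose the positivity event $\{U>0,V>0,S>0,T>0\}$ (probability $1/16$) into the two events $\pm(UT-SV)>0$ plus a null tie set, and then equate the two halves by a distribution-preserving permutation of the i.i.d.\ vector that negates $UT-SV$. The difference lies in the permutation chosen and in how much work it takes to apply it. The paper uses the within-pair swap $(U,V,S,T)\mapsto(V,U,T,S)$, and before applying it argues that one positivity constraint in each of $\mathcal{A}^{\pm}$ is redundant (e.g.\ $UT-SV>0$ together with $V,S,T>0$ forces $U>0$), so that the reduced descriptions of $\mathcal{A}^{+}$ and $\mathcal{A}^{-}$ are carried into one another. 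Your across-pair swap $(U,V,S,T)\mapsto(S,T,U,V)$ manifestly preserves the positivity event as written and sends $UT-SV$ to $SV-UT$, so no redundancy observation is needed; this is a genuine streamlining of the same idea. Your ``sanity check'' is additionally a second, independent route that the paper does not take: conditioned on positivity the four variables become i.i.d.\ half-normals, so $UT$ and $SV$ are i.i.d.\ continuous random variables, whence $P(UT>SV)=1/2$; that argument is also complete on its own, and you correctly handle the only delicate point (the tie event $\{UT=SV\}$ has probability zero) in both versions.
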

\begin{proof} Set $\mathcal{A}^\pm=\{U>0;V>0;S>0;T>0; \pm(UT-SV)>0\},$    and  $\mathcal{A}^0=\{U>0;V>0;S>0;T>0; UT-SV=0\}.$
    Denote by $p^{\pm}=P(\mathcal{A}^\pm)$ and $p^0=P(\mathcal{A}^0).$
    Then, since  $p^0=0$ and
     $ \mathcal{A^-}\cup\mathcal{A}^0\cup\mathcal{A}^+=\{U>0;V>0;S>0;T>0\}$ it
      holds that $p^++p^-=(1/2)^4=1/16.$ To end the proof it suffices to show that $p^+=p^-.$

    Notice first that
    \begin{align*}
    \mathcal{A}^+&=\{U>0;V>0;S>0;T>0; UT-SV>0\}=\{V>0;S>0;T>0; UT-SV>0\},\\
        \mathcal{A}^-&=\{U>0;V>0;S>0;T>0; UT-SV<0\}=\{U>0;S>0;T>0; SV-UT>0\}.
    \end{align*}
This is so, because for instance in the definition of
$\mathcal{A}^+$, the last inequality can also be written as
$U>SV/T>0$ and from it we know that the condition $U>0$ can be
removed. Finally, interchanging $U$ and $V$ and $S$ and $T$ we get
the same relations in the definitions of $\mathcal{A}^+$ and
$\mathcal{A}^-$. Since all variables are independent $\mathrm{N}(0,1)$, both
sets have the same probability and $p^+=p^-,$ as we wanted to prove.
    ~\end{proof}

\begin{propo}\label{p:nova} With the above notations,
  $r_n\leq {1}/{2^n},$ for all $n\geq 1.$  Moreover, $r_1=1/2,$ $r_2=1/4,$ $r_3=1/16$ and
  $r_4<r_3/2=1/32.$
\end{propo}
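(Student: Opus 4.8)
The plan is to split the statement into three parts: the general upper bound $r_n\le 1/2^n$, the explicit small cases $r_1,r_2,r_3$, and the strict inequality $r_4<1/32$. For the upper bound, I would argue that the event $\{X=n\}$, i.e.\ all $n$ roots of $Q(\lambda)$ have negative real part, forces every elementary symmetric function of the roots to have a definite sign; concretely, by the Routh--Hurwitz conditions a necessary condition for all roots to have negative real part is that all coefficients $A_0,A_1,\ldots,A_n$ share the same sign (after normalizing the leading coefficient). Thus $\{X=n\}\subseteq\{\operatorname{sign}(A_0)=\operatorname{sign}(A_1)=\cdots=\operatorname{sign}(A_n)\}$, up to the negligible event where some $A_j=0$. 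Since the $A_j$ are i.i.d.\ $\mathrm{N}(0,1)$ and hence symmetric and independent, the probability that they all share a common sign is $2\cdot(1/2)^{n+1}=1/2^n$. This yields $r_n=p_n\le 1/2^n$ immediately.

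For the explicit values, $r_1=1/2$ is just part (a) of Lemma~\ref{l:lemmageneral}, and $r_2=1/4$ follows from part (d) of the same lemma (the $n=2$ case), both already available. For $r_3=1/16$ I would compute directly: stability index $3$ for a cubic means all three roots have negative real part, which by Routh--Hurwitz requires $A_3,A_2,A_1,A_0$ all of the same sign \emph{and} the additional inequality $A_2A_1-A_3A_0>0$ (after normalizing so the leading coefficient is positive). By symmetry I may restrict to $A_3>0$ and double. The event then becomes $\{A_3>0;A_2>0;A_1>0;A_0>0;A_2A_1-A_3A_0>0\}$, which is exactly the configuration handled by Lemma~\ref{l:lemmaauxq} with $(U,V,S,T)=(A_2,A_0,A_3,A_1)$, giving probability $1/32$; doubling yields $r_3=1/16$.

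For $r_4<1/32$, the idea is again to use the Routh--Hurwitz criterion for the quartic, which requires all coefficients of one sign together with two further polynomial inequalities in $A_0,\ldots,A_4$. The event $\{X=4\}$ is therefore a \emph{strict} subset of the all-same-sign event $\{A_0,\ldots,A_4>0\}\cup\{A_0,\ldots,A_4<0\}$, whose probability is exactly $1/16$; hence $r_4<1/16$. To sharpen this to $r_4<1/32=r_3/2$ I expect to need to exploit one of the extra Routh--Hurwitz inequalities to cut the all-positive orthant's contribution strictly below half, perhaps by comparing the quartic condition with the cubic condition via a conditioning/marginalization argument over one coefficient. The main obstacle will be this last step: unlike the cubic case, the quartic Routh--Hurwitz region is not captured by a single clean determinant sign, so showing the strict factor-of-two improvement rigorously (rather than merely numerically, as the table suggests) will require either an explicit integral estimate or an inclusion into a strictly smaller symmetric event whose probability can be evaluated in closed form.
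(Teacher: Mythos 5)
Your treatment of the bound $r_n\le 1/2^n$ and of the values $r_1$, $r_2$, $r_3$ is correct and essentially the paper's own argument: the same-sign necessary condition gives the bound, and your reduction of the cubic case to Lemma~\ref{l:lemmaauxq} (restrict to the positive orthant by symmetry, double, identify $A_1A_2-A_0A_3>0$ with $UT-SV>0$) is exactly the paper's computation. The only variation is cosmetic: the paper obtains $r_1,r_2$ by noting that in degrees $1$ and $2$ same-sign coefficients are also \emph{sufficient} for Hurwitz stability, whereas you invoke Lemma~\ref{l:lemmageneral} via Corollary~\ref{c:propoEDOL3}; both are valid. The genuine gap is the final claim $r_4<r_3/2$, which is the only substantive strict inequality in the proposition: you prove only $r_4<1/16$ and then explicitly defer the factor-of-two improvement to a future "integral estimate or an inclusion into a strictly smaller symmetric event" without producing one. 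As written, the proposal does not prove $r_4<1/32$.

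The missing idea is that the quartic Routh--Hurwitz region sits inside the product of the \emph{cubic} region with an independent half-space. In the increasing-subscript notation of the paper (leading coefficient $a_0>0$), the quartic conditions are $a_1>0$, $a_1a_2-a_0a_3>0$, $a_3(a_1a_2-a_0a_3)-a_4a_1^2>0$ and $a_4>0$. The third condition together with $a_4>0$ forces $a_3>a_4a_1^2/(a_1a_2-a_0a_3)>0$, so the quartic event $\mathcal{A}_4^+$ is contained in
\[
\mathcal{B}=\bigl\{a_0>0,\;a_1>0,\;a_3>0,\;a_1a_2-a_0a_3>0,\;a_4>0\bigr\},
\]
which is precisely the cubic stability event in the variables $(a_0,a_1,a_2,a_3)$ intersected with $\{a_4>0\}$. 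Since the coefficients are i.i.d.\ and $a_4$ does not appear in the cubic conditions, $P(\mathcal{B})=p_3^+\cdot\frac12$. Moreover the inclusion misses an open set of positive Lebesgue measure (all points of $\mathcal{B}$ with $0<a_3<a_4a_1^2/(a_1a_2-a_0a_3)$, e.g.\ $a_3$ small), and the joint Gaussian density is everywhere positive, so $p_4^+<p_3^+/2$; doubling over the two sign orthants gives $r_4<r_3/2$. This is exactly the conditioning/marginalization argument you anticipated, but carrying it out requires the structural observation that the fourth-order Hurwitz conditions strictly imply the third-order conditions on the first four coefficients together with the independent event $a_4>0$ — without that, the "strictly smaller symmetric event whose probability can be evaluated in closed form" you hoped for does not materialize.
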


\begin{proof}[Proof of Proposition~\ref{p:nova}]
Notice  that $r_n$ is the probability that the characteristic polynomial  $Q(\lambda)$,
 associated with the random differential equation \eqref{eqdiflineal}, is a Hurwitz stable polynomial;
 that is
$ r_n=P(\mbox{Every root of }Q(\lambda)\mbox{ belongs to }
\mathfrak{R}^-), $ where $\mathfrak{R}^-=\{z\in\mathbb{C}\mbox{ such
that } \mathrm{Re}(z)<0\}$. It is well--known that a necessary
condition for a polynomial to have every root in $\mathfrak{R}^-$ is
that all its coefficients have the same sign. This is so because it
holds for polynomials of degree 1 and 2, and this property is
preserved when we multiply two polynomials satisfying it. Hence,
\begin{multline}\label{eq:sets}
\left\{\mbox{$A_0,\ldots ,A_n$ such that all roots of
}P(\lambda)\mbox{ are in } \mathfrak{R}^-\right\}
\subset\\\left\{\bigcap\limits_{i=0}^n \{A_i<0\}\right\}\bigcup
\left\{\bigcap\limits_{i=0}^n \{A_i>0\}\right\}.
\end{multline}
Since the variables $A_i$ are independent and symmetric
$$
P\left(\bigcap\limits_{i=0}^n
\{A_i<0\}\right)=P\left(\bigcap\limits_{i=0}^n \{A_i>0\}\right)=
\dfrac{1}{2^{n+1}}.
$$
As a consequence,
$$
r_n\leq P\left(\bigcap\limits_{i=0}^n
\{A_i<0\}\right)+P\left(\bigcap\limits_{i=0}^n \{A_i>0\}\right)=
\dfrac{1}{2^{n}},
$$
and the first statement follows.

The equalities $r_1=1/2$ and  $r_2=1/4$ are a simple consequence
that for $n=1,2$ the inclusion~\eqref{eq:sets} is an equality.

Let us prove  that $r_3=p_3=1/16.$  By using the Routh-Hurwitz criterion
    \cite[p.~1076]{GR}, it can be seen that  $a_3\lambda^3+a_2\lambda^2+a_1\lambda+a_0 $
    has every root in $\mathfrak{R}^-$  if and only if all its coefficients
    have the same sign and moreover $a_1a_2-a_0a_3>0$. Hence, $p_3=p_3^-+p_3^+,$
    where $p_3^-:=P(A_0<0;A_1<0;A_2<0;A_3<0; A_1A_2-A_0A_3>0);$ and
    $p_3^+:=P(A_0>0;A_1>0;A_2>0;A_3>0; A_1A_2-A_0A_3>0),$  with all the $A_i$ being $\mathrm{N}(0,1)$ distributed and independent. Due to their
    symmetry, the random variables $A_i$ and $-A_i$, for $i=0,\ldots,3$ have
    the same distribution and hence $p_3^+=p_3^-.$ Therefore $p_3=2p_3^+$.
    The result follows now by Lemma \ref{l:lemmaauxq}, which gives
    $p_3^+=1/32$.

Let us prove that $r_4<r_3/2.$ To compare both probabilities, here
it will be more convenient to write the coefficients of the
polynomials with subscripts with increasing ordering, that is
$q_n(x)=a_0x^n+a_1x^{n-1}+\cdots+a_{n-1}x+a_n.$ With this notation, which also respects the traditional notation when writing the Hurwitz matrices,
and when $a_0>0,$ the Routh-Hurwitz conditions to have stability index $n$  for $n=3,4$ are
precisely that the principal minors of the following matrices
\[
\left(
  \begin{array}{ccc}
    a_1 & a_3 & 0  \\
    a_0 & a_2 &  0 \\
    0 & a_1 & a_3
  \end{array}
\right)\qquad\mbox{and}\qquad \left(
  \begin{array}{cccc}
    a_1 & a_3 & 0 & 0 \\
    a_0 & a_2 & a_4 & 0 \\
    0 & a_1 & a_3 & 0 \\
    0 & a_0 & a_2 & a_4 \\
  \end{array}
\right),
\]
are positive, where the left-hand one corresponds to the case $n=3$ and the
other to the case when $n=4.$ Hence, these conditions when $a_0>0$ and for
$n=3$ are: $a_1>0, a_1a_2-a_0a_3>0$ and $a_3>0.$ Similarly, for
$n=4$ the conditions are $a_1>0, a_1a_2-a_0a_3>0,$ $a_3(a_1a_2-a_0a_3)-a_4a_1^2>0$
and $a_4>0.$

Consider now, for $n=3,4$, the random polynomials
$Q_n(x)=\tilde{A}_0x^n+\tilde{A}_1x^{n-1}+\cdots+\tilde{A}_{n-1}x+\tilde{A}_n,$ where  $\tilde{A}_i\sim
\mathrm{N}(0,1)$ and are independent (notice that with this notation each coefficient $\tilde{A}_i$ is the coefficient $A_{n-i}$ of the characteristic polynomial). For simplicity we denote with the same name
the coefficients of $Q_3$ and $Q_4$ although they are different
random variables. As above, $r_3=2p_3^+,$ and $r_4=2p_4^+,$  where
$p_k^+=P(\mathcal{A}^+_k),$ with
\begin{align*}
\mathcal{A}^+_3&=\{\tilde{A}_0>0;\tilde{A}_1>0;\tilde{A}_3>0; \tilde{A}_1\tilde{A}_2-\tilde{A}_0\tilde{A}_3>0\},\\
\mathcal{A}^+_4&=\{\tilde{A}_0>0;\tilde{A}_1>0;\tilde{A}_3>\tilde{A}_4\tilde{A}_1^2/(\tilde{A}_1\tilde{A}_2-\tilde{A}_0\tilde{A}_3);
\tilde{A}_1\tilde{A}_2-\tilde{A}_0\tilde{A}_3>0,\tilde{A}_4>0\}.
\end{align*}
Notice that if we define
\[
\mathcal{B}=\{\tilde{A}_0>0;\tilde{A}_1>0;\tilde{A}_3>0; \tilde{A}_1\tilde{A}_2-\tilde{A}_0\tilde{A}_3>0; \tilde{A}_4>0\}
\]
it is clear that $P(\mathcal{B})=p_3^+/2$ and, moreover
$\mathcal{A}^+_4\subset \mathcal{B}_3,$ with the inclusion being strict.
Since the joint density is positive and $\mathcal{B}\cap (\mathcal{A}^+_4)^c$ has positive Lebesgue measure, we have $P(\mathcal{B}\cap (\mathcal{A}^+_4)^c)>0.$ Thus $P(\mathcal{A}^+_4)<P(\mathcal{B})$, and hence
$p_4^+=P(\mathcal{A}^+_4)<P(\mathcal{B})=p_3^+/2,$ and
$r_4<r_3/2,$ as we wanted to show.~\end{proof}

\begin{corol}\label{p:propoEDOL3}
Consider a linear random homogeneous differential equation of order
$n=3$ and the random variable $X$  defined above. Then
$p_0=p_3={1}/{16}$ and $p_1=p_2={7}/{16}$.
    \end{corol}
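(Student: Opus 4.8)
The plan is to assemble Corollary~\ref{p:propoEDOL3} directly from the structural results already proven for $n=3.$ The key observation is that Proposition~\ref{p:nova} gives me the single value $r_3=p_3=1/16,$ and Proposition~\ref{p:propoEDOL} gives me all the symmetry and parity constraints I need to pin down the remaining probabilities. So the corollary is really just a matter of solving a small linear system.

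First I would invoke Proposition~\ref{p:propoEDOL}(b) with $n=3,$ which yields $p_0=p_3$ and $p_1=p_2.$ Combined with Proposition~\ref{p:nova}, this immediately gives $p_0=p_3=1/16.$ Next I would use the normalization $\sum_{k=0}^3 p_k=1$ from Proposition~\ref{p:propoEDOL}(a): substituting the known values gives $2\cdot\tfrac{1}{16}+2p_1=1,$ so $p_1=p_2=\tfrac{1}{2}-\tfrac{1}{16}=\tfrac{7}{16}.$ As a consistency check, one can verify the parity relation from Proposition~\ref{p:propoEDOL}(c): the even-indexed probabilities sum to $p_0+p_2=\tfrac{1}{16}+\tfrac{7}{16}=\tfrac{1}{2},$ and likewise the odd-indexed ones, exactly as required.

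There is essentially no main obstacle here, since all the substantive work was done in establishing Proposition~\ref{p:nova} (in particular the computation $r_3=1/16$ via the Routh-Hurwitz criterion and Lemma~\ref{l:lemmaauxq}). The only thing to be careful about is that I am using $r_3=p_3,$ which is exactly the definition of $r_n$ given just before Proposition~\ref{p:nova}, together with the identification $p_3=p_0.$ I would therefore present the corollary as a short deduction, citing Propositions~\ref{p:propoEDOL} and~\ref{p:nova}, and close by noting that the four values are completely determined and consistent with all the affine relations of Lemma~\ref{l:lemmageneral}.
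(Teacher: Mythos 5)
Your proposal is correct and follows essentially the same route as the paper's own proof: both obtain $p_0=p_3=r_3=1/16$ from Proposition~\ref{p:nova} together with the symmetry $p_k=p_{n-k}$ of Proposition~\ref{p:propoEDOL}(b), and then deduce $p_1=p_2=7/16$ from the normalization in Proposition~\ref{p:propoEDOL}. Your added consistency check against the parity relation (c) is a nice touch but not needed; the argument is complete without it.
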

\begin{proof}
    By the above proposition, for $n=3,$  $p_0=p_3=r_3=1/16.$ Hence, by Proposition~\ref{p:propoEDOL}, $p_1=p_2=7/16.$
\end{proof}

\medskip

The  computations in this case are similar to the ones of the
previous section and the obtained results are summarized in Table 3.
 We only give some comments for the cases $n=8$ and $10,$ where we
have encountered that the vectors~$\widehat{\mathbf{p}}$  have
negative and very small entries. This has occurred because the
observed frequencies obtained by the Monte Carlo approach
corresponding  to these probabilities are not enough accurate. For
this reason,  we have made a new optimization step.  As before, we use the least squares method to obtain a vector $\widehat{\mathbf{p}}$. However, if negative entries appear in this vector (which is clearly objectionable), we force them to be zero and find a new least squares estimate, which still respects the original linear constraints.

\smallskip

We explain this process for the $n=8$ order case: The observed
relative frequencies vector obtained by the Monte Carlo method is
{\small \begin{multline*}\widetilde{\mathbf{p}}^t=
\Bigg({\frac{1}{50000000}}, \frac{6599}{50000000},
\frac{1159359}{50000000}, \frac{4996163}{20000000},
\frac{45377377}{100000000},\\ \frac{4995607}{ 20000000},
\frac{2318357}{100000000}, \frac{13497}{100000000},
\frac{1}{100000000} \Bigg).
\end{multline*}}
The relations stated in Corollary \ref{c:propoEDOL3}  are
$p_3=1/4-p_1$, $p_4=1/2-2p_0-2p_2$, $p_5=p_3$, $p_6=p_2$, $p_7=p_1$,
$p_8=p_0$. By solving the system \eqref{e:pMqb} with

$$
\mathrm{M}=\left(\begin{array}{ccc}
1 & 0 & 0\\
0 & 1 & 0\\
0 & 0 & 1\\
0& -1 & 0\\
-2& 0 & -2\\
0& -1 & 0\\
0 & 0 & 1\\
0 & 1 & 0\\
1 & 0 & 0
\end{array}\right); \quad \mathbf{q}=\left(\begin{array}{c}
p_0\\
p_1\\
p_2
\end{array}\right); \,\mbox{ and } b=\left(\begin{array}{c}
0\\
0\\
0\\
\frac{1}{2}\\
\frac{1}{4}\\
\frac{1}{2}\\
0\\
0\\
0
\end{array}\right)
$$
we obtain
$$
\widehat{\mathbf{q}}^t=\left(
-{\frac{5779}{200000000}},\,{\frac{13569}{
80000000}},\,{\frac{4631293}{200000000}}\right).
$$
Hence, by \eqref{e:improvedestimations} we get
{\small\begin{multline*} \widehat{\mathbf{p}}= \Bigg(
\frac{-5779}{200000000},\frac{
13569}{80000000},\frac{4631293}{200000000},\frac{19986431}{
80000000},\frac{22687243}{50000000},\\\frac{19986431}{80000000},
\frac{4631293}{200000000},\frac{13569}{80000000},\frac{-5779}{
200000000}\Bigg).
\end{multline*}}
\noindent So we impose that $p_0=p_8=0$. Thus we have
$p_3=p_5=1/4-p_1$, $p_4=1/2-2p_0-2p_2=1/2-2p_2$, $p_6=p_2$ and
$p_7=p_1$. We find the least squares solution of the system
$$
\left(\begin{array}{c}
\widehat{p}_1\\
\widehat{p}_2\\
\widehat{p}_3\\
\widehat{p}_4\\
\widehat{p}_5\\
\widehat{p}_6\\
\widehat{p}_7
\end{array}\right)= \left( \begin {array}{cc}  1&0
\\ 0&1\\ -1&0\\ 0
&-2\\ -1&0\\ 0&1
\\ 1&0\\ \end {array} \right)\cdot \left(\begin{array}{c}
\widehat{p}^*_1\\
\widehat{p}^*_2
\end{array}\right)+\left(\begin{array}{c}
0\\
0\\
\frac{1}{4}\\
\frac{1}{2}\\
\frac{1}{4}\\
0\\
0
\end{array}\right).
$$
Using \eqref{e:leastquaresolution} and \eqref{e:improvedestimations}
we obtain {\small $$ \widehat{\mathbf{p}}^*= \left(
0,\,{\frac{13569}{80000000}},\,{\frac{
13882321}{600000000}},\,{\frac{19986431}{80000000}},\,{\frac{136117679}{
300000000}},\,{\frac{19986431}{80000000}},\,{\frac{13882321}{600000000}},\,{
\frac{13569}{80000000}},\,0\right)
 $$}
$$
\simeq \left( 0,\, 0.00017,\, 0.02314 ,\, 0.24983,\,
0.45373,\,0.24983,\, 0.02314,\,
 0.00017,\, 0\right).
$$

The $n=10$ case follows analogously.

\begin{center}
\begin{tabular}{|l|l|l|l|}
\hline
Dimension & Observed frequency &  Least squares & Relations (Corol. \ref{c:propoEDOL3} and \ref{p:propoEDOL3})\\
\hline
\hline
$n=1$ & $\widetilde{p}_0=0.49997$ & &$p_0=0.5$\\
{}  & $\widetilde{p}_1=0.50003$ &   &$p_1=0.5$\\
\hline
$n=2$ & $\widetilde{p}_0=0.24994$ & &$p_0=0.25$\\
{}  & $\widetilde{p}_1=0.49999$ &   &$p_1=0.5$\\
{}  & $\widetilde{p}_2=0.25007$ &   &$p_2=0.25$\\
\hline
$n=3$ & $\widetilde{p}_0=0.06252$ & &$p_0=\frac1{16}=0.0625$\\
{}  & $\widetilde{p}_1=0.43743$ &   &$p_1=\frac7{16}=0.4375$\\
{}  & $\widetilde{p}_2=0.43756$ &   &$p_2=\frac7{16}=0.4375$\\
{}  & $\widetilde{p}_3=0.06249$ & &$p_3=\frac1{16}=0.0625$\\
\hline
$n=4$ & $\widetilde{p}_0=0.00928$ &  $\widehat{p}_0=0.00925$ &$p_0$\\
{}  & $\widetilde{p}_1=0.24998$ &  $\widehat{p}_1=0.25$ &$p_1=\frac{1}{4}$\\
{}  & $\widetilde{p}_2=0.48152$ &  $\widehat{p}_2=0.48150$ &$p_2=\frac{1}{2}-2p_0$\\
{}  & $\widetilde{p}_3=0.24994$ &  $\widehat{p}_3=0.25$ &$p_3=\frac{1}{4}$\\
{}  & $\widetilde{p}_4=0.00929$ &  $\widehat{p}_4=0.00925$ &$p_4=p_0$\\
\hline
$n=5$ & $\widetilde{p}_0=0.00071$ & $\widehat{p}_0=0.00071$ &  $p_0$\\
{}  & $\widetilde{p}_1=0.08405$ & $\widehat{p}_1=0.08404$ &  $p_1$\\
{}  & $\widetilde{p}_2=0.41526$ & $\widehat{p}_2=0.41525$ &  $p_2=\frac{1}{2}-p_0-p_1$\\
{}  & $\widetilde{p}_3=0.41523$ & $\widehat{p}_3=0.41525$ &  $p_3=\frac{1}{2}-p_0-p_1$\\
{}  & $\widetilde{p}_4=0.08404$ & $\widehat{p}_4=0.08404$ &  $p_4=p_1$\\
{}  & $\widetilde{p}_5=0.00071$ & $\widehat{p}_5=0.00071$ &  $p_5=p_0$\\
\hline
\end{tabular}
\end{center}

\begin{center}
\begin{tabular}{|l|l|l|l|}
\hline
Dimension & Observed frequency &  Least squares & Relations (Corol. \ref{c:propoEDOL3} and \ref{p:propoEDOL3})\\
\hline
\hline
$n=6$ & $\widetilde{p}_0=0.00003$ & $\widehat{p}_0=0.00005$ &  $p_0$\\
{}  & $\widetilde{p}_1=0.01723$ & $\widehat{p}_1=0.01720$ &  $p_1$\\
{}  & $\widetilde{p}_2=0.24994$ & $\widehat{p}_2=0.24995$ &  $p_2=\frac{1}{4}-p_0$\\
{}  & $\widetilde{p}_3=0.46562$ & $\widehat{p}_3=0.46560$ &  $p_3=\frac{1}{2}-2p_1$\\
{}  & $\widetilde{p}_4=0.24993$ & $\widehat{p}_4=0.24995$ &  $p_4=\frac{1}{4}-p_0$\\
{}  & $\widetilde{p}_5=0.01723$ & $\widehat{p}_5=0.01720$ &  $p_5=p_1$\\
{}  & $\widetilde{p}_6=0.00003$ & $\widehat{p}_6=0.00005$ &  $p_6=p_0$\\
\hline
$n=7$ & $\widetilde{p}_0=0$ & $\widehat{p}_0=0$ &  $p_0$\\
{}  & $\widetilde{p}_1=0.00200$ & $\widehat{p}_1=0.00200$ &  $p_1$\\
{}  & $\widetilde{p}_2=0.09571$ & $\widehat{p}_2=0.09572$ &  $p_2$\\
{}  & $\widetilde{p}_3=0.40224$ & $\widehat{p}_3=0.40228$ &  $p_3=\frac{1}{2}-p_0-p_1-p_2$\\
{}  & $\widetilde{p}_4=0.40233$ & $\widehat{p}_4=0.40228$ &  $p_4=\frac{1}{2}-p_0-p_1-p_2$\\
{}  & $\widetilde{p}_5=0.09573$ & $\widehat{p}_5=0.09572$ &  $p_5=p_2$\\
{}  & $\widetilde{p}_6=0.00199$ & $\widehat{p}_6=0.00200$ &  $p_6=p_1$\\
{}  & $\widetilde{p}_7=0$ & $\widehat{p}_7=0$ &  $p_7=p_0$\\
\hline
$n=8$ & $\widetilde{p}_0=0$ & $\widehat{p}_0^*=0$ &  $p_0$\\
{}  & $\widetilde{p}_1=0.00013$ & $\widehat{p}_1^*=0.00017$ &  $p_1$\\
{}  & $\widetilde{p}_2=0.02319$ & $\widehat{p}_2^*=0.02314$ &  $p_2$\\
{}  & $\widetilde{p}_3=0.24981$ & $\widehat{p}_3^*=0.24983$ &  $p_3=\frac{1}{4}-p_1$\\
{}  & $\widetilde{p}_4=0.45377$ & $\widehat{p}_4^*=0.45372$ &  $p_4=\frac{1}{2}-2p_0-2p_2$\\
{}  & $\widetilde{p}_5=0.24978$ & $\widehat{p}_5^*=0.24983$ &  $p_5=\frac{1}{4}-p_1$\\
{}  & $\widetilde{p}_6=0.02318$ & $\widehat{p}_6^*=0.02314$ &  $p_6=p_2$\\
{}  & $\widetilde{p}_7=0.00013$ & $\widehat{p}_7^*=0.00017$ &  $p_7=p_1$\\
{}  & $\widetilde{p}_8=0$ & $\widehat{p}_8^*=0$ &  $p_8=p_0$\\
\hline
$n=9$ & $\widetilde{p}_0=0$ & $\widehat{p}_0=0$ &  $p_0$\\
{}  & $\widetilde{p}_1=0.00001$ & $\widehat{p}_1=0.00005$ &  $p_1$\\
{}  & $\widetilde{p}_2=0.00336$ & $\widehat{p}_2=0.00337$ &  $p_2$\\
{}  & $\widetilde{p}_3=0.10337$ & $\widehat{p}_3=0.10335$ &  $p_3$\\
{}  & $\widetilde{p}_4=0.39328$ & $\widehat{p}_4=0.39328$ &  $p_4=\frac{1}{2}-p_0-p_1-p_2-p_3$\\
{}  & $\widetilde{p}_5=0.39328$ & $\widehat{p}_5=0.39328$ &  $p_5=\frac{1}{2}-p_0-p_1-p_2-p_3$\\
{}  & $\widetilde{p}_6=0.10332$ & $\widehat{p}_6=0.10335$ &  $p_6=p_3$\\
{}  & $\widetilde{p}_7=0.00338$ & $\widehat{p}_7=0.00337$ &  $p_7=p_2$\\
{}  & $\widetilde{p}_8=0$ & $\widehat{p}_8=0.00005$ &  $p_8=p_1$\\
{}  & $\widetilde{p}_9=0$ & $\widehat{p}_9=0$ &  $p_9=p_0$\\
\hline
\end{tabular}
\end{center}

\begin{center}
\begin{tabular}{|l|l|l|l|}
\hline
Dimension & Observed frequency &  Least squares & Relations (Corol. \ref{c:propoEDOL3} and \ref{p:propoEDOL3})\\
\hline
\hline

$n=10$ & $\widetilde{p}_0=0$ & $\widehat{p}_0^*=0$ &  $p_0$\\
{}  & $\widetilde{p}_1=0$ & $\widehat{p}_1^*=0.00002$ &  $p_1$\\
{}  & $\widetilde{p}_2=0.00030$ & $\widehat{p}_2^*=0.00028$ &  $p_2$\\
{}  & $\widetilde{p}_3=0.02784$ & $\widehat{p}_3^*=0.02787$ &  $p_3$\\
{}  & $\widetilde{p}_4=0.24976$ & $\widehat{p}_4^*=0.24972$ &  $p_4=\frac{1}{4}-p_0-p_2$\\
{}  & $\widetilde{p}_5=0.44421$ & $\widehat{p}_5^*=0.44422$ &  $p_5=\frac{1}{2}-2p_1-2p_3$\\
{}  & $\widetilde{p}_6=0.24973$ & $\widehat{p}_6^*=0.24972$ &  $p_6=\frac{1}{4}-p_0-p_2$\\
{}  & $\widetilde{p}_7=0.02787$ & $\widehat{p}_7^*=0.02787$ &  $p_7=p_3$\\
{}  & $\widetilde{p}_8=0.00029$ & $\widehat{p}_8^*=0.00028$ &  $p_8=p_2$\\
{}  & $\widetilde{p}_9=0$ & $\widehat{p}_9^*=0.00002$ &  $p_9=p_1$\\
{}  & $\widetilde{p}_{10}=0$ & $\widehat{p}_{10}^*=0$ &  $p_{10}=p_0$\\
\hline
\end{tabular}

\bigskip

{\bf Table 3.} Stability indexes for order $n$ linear random homogeneous differential equations.
\end{center}

\section{Linear random maps}\label{ss:LRMaps}

In order to keep the approach   of the preceding sections,  we suggest to consider random linear discrete dynamical systems
of the form
\begin{equation}\label{sistemadiscret}\mathcal{B}\,\mathbf{x}_{k+1}=A\,\mathbf{x}_k\,\,
\text{where}\,\, \mathbf{x}\in\R^n,
\end{equation}
where $\mathcal{B}$  and each of  the $n^2$ entries of the random
matrix $A$ are i.i.d.~$\mathrm{N}(0,1)$ random variables. Observe that to ensure that the results are invariant under time-scaling, is necessary to add the term $\mathcal{B}$ in the left-hand side of Equation \eqref{sistemadiscret}.
Then,
 given a linear discrete random system \eqref{sistemadiscret}, its
characteristic random polynomial associated with the matrix $\frac1{\mathcal{B}}A$ is
$$
Q(\lambda)=Q_n\lambda^n+Q_{n-1}\lambda^{n-1}+\cdots+Q_1\lambda+Q_0
$$
where each random variable $Q_j$ is a polynomial in the variables
$1/\mathcal{B},A_{1,1},\ldots,A_{n,n}$ which has a complicated distribution function.
We denote by $X$ the random variables that assigns to
each $Q$ its number of roots with modulus smaller than $1,$ that is, 
the stability index of the matrix $\frac1{\mathcal{B}}A.$ Also $p_k$
denotes the probabilities that $X$ takes the value $k.$

As we will see in the examples, in this case
the condition $p_k=p_{n-k}$ is no longer satisfied. Among
other reasons it happens that the entries of  $A^{-1}$
have not simple distributions.   Since we do not know other relations on the
probabilities $p_k$  apart from the trivial one $\sum_{k=0}^n p_k=1$.
Since this is directly fulfilled by the observed relative frequencies, in
this case we do not perform the least squares refinement.

The case $n=1$ is the only one that we have been able to solve
analytically. Notice that in this situation the only solution of
$Q(\lambda)=0$ is $\lambda=A/B,$ with $A$ and $B$ independent and
$\mathrm{N}(0,1)$. Hence $p_0=P(|A/B|>1)$ and $p_1=P(|A/B|<1)=P(|B/A|>1).$
Since $A/B$ and $B/A$ have the same distribution it holds that
$p_0=p_1=1/2.$

As in the other models, for each dimension $n\leq 10,$ we generate
$10^8$ discrete systems of the form \eqref{sistemadiscret}. For each
matrix $\frac1{\mathcal{B}}A$ we have computed the characteristic
polynomial $Q$ and its associated polynomial $Q^\star$ (see
Equation \eqref{e:polyestar}) and have counted the number of roots of this
last polynomial by using the Routh-Hurwitz zero counter. For $n\geq
5$ and in order to decrease the computation time we have directly
numerically computed the eigenvalues of the matrix and counted the
number of them with modulus less than one. The results  obtained are
shown in Table 4.

\bigskip

\begin{minipage}{0.5\textwidth}
%\vspace{-0.9cm}
\begin{tabular}{|l|l|}
\hline
Dimension & Observed frequency  \\
\hline
\hline
$n=1$ & $\widetilde{p}_0=0.49994$  \\
{}  & $\widetilde{p}_1=0.50006$  \\
\hline
$n=2$ & $\widetilde{p}_0=0.46348$  \\
{}  & $\widetilde{p}_1=0.27705$  \\
{}  & $\widetilde{p}_2=0.25947$  \\
\hline
$n=3$ & $\widetilde{p}_0=0.45261$  \\
{}  & $\widetilde{p}_1=0.25828$   \\
{}  & $\widetilde{p}_2=0.15351$    \\
{}  & $\widetilde{p}_3=0.13560$  \\
\hline
$n=4$ & $\widetilde{p}_0=0.45040$ \\
{}  & $\widetilde{p}_1=0.24732$  \\
{}  & $\widetilde{p}_2=0.14799$ \\
{}  & $\widetilde{p}_3=0.08127$  \\
{}  & $\widetilde{p}_4=0.07302$  \\
\hline
$n=5$ & $\widetilde{p}_0=0.44957$  \\
{}  & $\widetilde{p}_1=0.24536$   \\
{}  & $\widetilde{p}_2=0.13956$  \\
{}  & $\widetilde{p}_3=0.08116$  \\
{}  & $\widetilde{p}_4=0.04443$ \\
{}  & $\widetilde{p}_5=0.03992$  \\
\hline
\end{tabular}

\end{minipage}
\begin{minipage}{0.5\textwidth}
\vspace{-3cm}
\begin{tabular}{|l|l|}
\hline
Dimension & Observed frequency \\
\hline
\hline
$n=6$ & $\widetilde{p}_0=0.44944$ \\
{}  & $\widetilde{p}_1=0.24419$  \\
{}  & $\widetilde{p}_2=0.13838$ \\
{}  & $\widetilde{p}_3=0.07536$ \\
{}  & $\widetilde{p}_4=0.04606$  \\
{}  & $\widetilde{p}_5=0.02449$  \\
{}  & $\widetilde{p}_6=0.02209$ \\
\hline
$n=7$ & $\widetilde{p}_0=0.44937$  \\
{}  & $\widetilde{p}_1=0.24394$  \\
{}  & $\widetilde{p}_2=0.13723$  \\
{}  & $\widetilde{p}_3=0.07480$  \\
{}  & $\widetilde{p}_4=0.04226$  \\
{}  & $\widetilde{p}_5=0.02636$  \\
{}  & $\widetilde{p}_6=0.01367$  \\
{}  & $\widetilde{p}_7=0.01236$  \\
\hline
\end{tabular}

\end{minipage}

\begin{minipage}{0.5\textwidth}
%\vspace{-0.9cm}
\begin{tabular}{|l|l|}
\hline
Dimension & Observed frequency  \\
\hline
\hline
$n=8$ & $\widetilde{p}_0=0.44937$  \\
{}  & $\widetilde{p}_1=0.24381$  \\
{}  & $\widetilde{p}_2=0.13702$ \\
{}  & $\widetilde{p}_3=0.07388$  \\
{}  & $\widetilde{p}_4=0.04207$  \\
{}  & $\widetilde{p}_5=0.02394$ \\
{}  & $\widetilde{p}_6=0.01526$ \\
{}  & $\widetilde{p}_7=0.00768$  \\
{}  & $\widetilde{p}_8=0.00698$  \\
\hline
$n=9$ & $\widetilde{p}_0=0.44941$  \\
{}  & $\widetilde{p}_1=0.24374$   \\
{}  & $\widetilde{p}_2=0.13680$   \\
{}  & $\widetilde{p}_3=0.07371$    \\
{}  & $\widetilde{p}_4=0.04139$  \\
{}  & $\widetilde{p}_5=0.02400$  \\
{}  & $\widetilde{p}_6=0.01374$  \\
{}  & $\widetilde{p}_7=0.00889$  \\
{}  & $\widetilde{p}_8=0.00434$ \\
{}  & $\widetilde{p}_9=0.00397$  \\
\hline
\end{tabular}
\end{minipage}
\begin{minipage}{0.5\textwidth}
\vspace{-4.8cm}
\begin{tabular}{|l|l|}
\hline
Dimension & Observed frequency \\
\hline
\hline
$n=10$ & $\widetilde{p}_0=0.44934$ \\
{}  & $\widetilde{p}_1=0.24371$  \\
{}  & $\widetilde{p}_2=0.13687$  \\
{}  & $\widetilde{p}_3=0.07358$ \\
{}  & $\widetilde{p}_4=0.04129$  \\
{}  & $\widetilde{p}_5=0.02348$ \\
{}  & $\widetilde{p}_6=0.01388$  \\
{}  & $\widetilde{p}_7=0.00792$ \\
{}  & $\widetilde{p}_8=0.00520$  \\
{}  & $\widetilde{p}_9=0.00247$ \\
{}  & $\widetilde{p}_{10}=0.00226$\\
\hline
\end{tabular}

%\bigskip
%
%\vspace{3.1cm}
%
%\hspace{-5cm}\centerline{{\bf Table 4.} Stability indexes for linear
%random maps.}
\end{minipage}  
  
  \bigskip

\centerline{{\bf Table 4.} Stability indexes for linear
random maps.}

\section{Linear random difference equations of order~$n$}\label{ss:LRdifference}

Finally we consider difference equations of order $n$ of type
$$
A_nx_{k+n}+A_{n-1}x_{k+n-1}+\cdots +A_1 x_{k+1}+A_0x_k=0,
$$
where all
the coefficients are i.i.d.~random
variables with $\mathrm{N}(0,1)$ distribution. In this situation, the stability index is given by the number of
zeros with modulus smaller than 1 of the random characteristic polynomial
$Q(\lambda)=A_n\lambda^{n}+A_{n-1}\lambda^{n-1}+\cdots
+A_1\lambda+A_0.$
As in the preceding sections let  $X$ be the random variable  that counts the number of roots
of $Q(\lambda)$ with  modulus smaller than 1 and set
$p_k=P(X=k)$ for $k=0,1,\ldots,n.$

Before proving some relations among the probabilities $p_k,$ we
give two preliminary lemmas. Let $\operatorname{erf}(x)=\frac 2{\sqrt{\pi}}\int_0^x{\rm
e}^{-u^2}{\rm d} u$ be the error function. The following result is
stated in~\cite{Bri}. We prove it for the sake of completeness.

\begin{lem}\label{le:bri} For $\alpha>0$ and $\beta\in\R,$
\[
F(\alpha,\beta):=\int_0^\infty {\rm
e}^{-\alpha^2x^2}\operatorname{erf}(\beta x)\,{\rm
d}x=\frac{\arctan(\beta/\alpha)}{\alpha\sqrt{\pi}}.
\]
\end{lem}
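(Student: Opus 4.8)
The plan is to evaluate $F(\alpha,\beta)$ by differentiating under the integral sign with respect to the parameter $\beta$ (Feynman's trick): this replaces the error function by a plain Gaussian weight and reduces the problem to an elementary integral. First I would record the base case. Since $\operatorname{erf}(0)=0$ we have $F(\alpha,0)=0$, which agrees with $\arctan(0)/(\alpha\sqrt{\pi})=0$. It is in fact enough to treat $\beta\ge 0$, because both sides of the claimed identity are odd in $\beta$ (the error function is odd and so is $\arctan$), although the computation below is valid verbatim for every real $\beta$.

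Next I would carry out the differentiation. Using $\tfrac{\dr}{\dr u}\operatorname{erf}(u)=\tfrac{2}{\sqrt{\pi}}\,\mathrm{e}^{-u^2}$ together with the chain rule applied to $u=\beta x$, one gets $\frac{\partial}{\partial\beta}\operatorname{erf}(\beta x)=\frac{2}{\sqrt{\pi}}\,x\,\mathrm{e}^{-\beta^2x^2}$, so that
\[
\frac{\partial F}{\partial\beta}(\alpha,\beta)=\frac{2}{\sqrt{\pi}}\int_0^\infty x\,\mathrm{e}^{-(\alpha^2+\beta^2)x^2}\,\dr x=\frac{2}{\sqrt{\pi}}\cdot\frac{1}{2(\alpha^2+\beta^2)}=\frac{1}{\sqrt{\pi}\,(\alpha^2+\beta^2)},
\]
where the middle integral is read off from the antiderivative $-\tfrac{1}{2(\alpha^2+\beta^2)}\mathrm{e}^{-(\alpha^2+\beta^2)x^2}$. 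I would then integrate this expression back in $\beta$, starting from the base value $F(\alpha,0)=0$:
\[
F(\alpha,\beta)=\int_0^\beta\frac{\dr s}{\sqrt{\pi}\,(\alpha^2+s^2)}=\frac{1}{\alpha\sqrt{\pi}}\arctan\!\Big(\frac{s}{\alpha}\Big)\Big|_{s=0}^{s=\beta}=\frac{\arctan(\beta/\alpha)}{\alpha\sqrt{\pi}},
\]
which is exactly the asserted formula.

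The only point requiring genuine care — and the main (though minor) obstacle — is justifying the interchange of differentiation and integration. For this I would fix an arbitrary compact interval $|\beta|\le B$ and dominate the $\beta$-derivative of the integrand uniformly: since $\alpha^2+\beta^2\ge\alpha^2$,
\[
\Big|\frac{\partial}{\partial\beta}\big(\mathrm{e}^{-\alpha^2x^2}\operatorname{erf}(\beta x)\big)\Big|=\frac{2}{\sqrt{\pi}}\,x\,\mathrm{e}^{-(\alpha^2+\beta^2)x^2}\le\frac{2}{\sqrt{\pi}}\,x\,\mathrm{e}^{-\alpha^2x^2},
\]
and the majorant on the right is integrable on $[0,\infty)$ and independent of $\beta$. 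The Leibniz rule (dominated convergence) then legitimizes the differentiation on $|\beta|\le B$, and since $B$ is arbitrary the whole computation holds for all $\beta\in\R$, which completes the argument.
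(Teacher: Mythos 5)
Your proposal is correct and follows essentially the same route as the paper's own proof: differentiation under the integral sign in $\beta$, evaluation of the resulting elementary Gaussian integral to get $\frac{1}{\sqrt{\pi}(\alpha^2+\beta^2)}$, and integration back from the base value $F(\alpha,0)=0$. The only difference is cosmetic---you spell out the dominating function $\frac{2}{\sqrt{\pi}}\,x\,\mathrm{e}^{-\alpha^2x^2}$ justifying the interchange, where the paper merely asserts absolute integrability of the partial derivative---so your write-up is, if anything, slightly more careful on that point.
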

\begin{proof}
Fixed $\alpha>0,$ the function that defines  $F$ is absolutely
integrable because $|\operatorname{erf}(x)|\le 1.$ Moreover its
partial derivative with respect to $\beta$ is also absolutely
integrable. Hence $\lim_{\beta\to0} F(\alpha,\beta)=F(\alpha,0)=0$
and
\begin{align*}
\frac{\partial F(\alpha,\beta)}{\partial \beta}&=\int_0^\infty
\frac{\partial}{\partial \beta}\left( {\rm
e}^{-\alpha^2x^2}\operatorname{erf}(\beta x)\right)\,{\rm d}x=
\frac2{\sqrt{\pi}}\int_0^\infty x\, {\rm e}^{-\alpha^2x^2} {\rm
e}^{-\beta^2x^2}\,{\rm d}x\\&=\frac2{\sqrt{\pi}}\int_0^\infty x\,
{\rm e}^{-(\alpha^2+\beta^2)\, x^2}\,{\rm
d}x=\frac1{(\alpha^2+\beta^2)\sqrt{\pi}}.
\end{align*}
Therefore
\[
F(\alpha,\beta)=F(\alpha,0)+\int_0^\beta \frac{\partial
F(\alpha,t)}{\partial t}\,{\rm d}t=\int_0^\beta
\frac1{(\alpha^2+t^2)\sqrt{\pi}}\,{\rm
d}t=\frac{\arctan(\beta/\alpha)}{\alpha\sqrt{\pi}},
\]
as we wanted to prove.
\end{proof}

Next result is a consequence of the previous lemma.

\begin{lem}\label{le:normals}
Let $U\sim \mathrm{N}(0,\sigma^2)$ and $V\sim \mathrm{N}(0,\rho^2)$  be independent
normal random variables. Then $P(U^2-V^2>0)
=\frac2\pi\arctan(\sigma/\rho).$
\end{lem}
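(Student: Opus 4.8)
The plan is to rewrite the event $\{U^2-V^2>0\}$ as $\{|V|<|U|\}$ and then condition on the value of $U$. First I would note that, since the boundary event $\{U^2=V^2\}$ has probability zero, we have $P(U^2-V^2>0)=P(|V|<|U|)$. Conditioning on $U=u$ and using that $V\sim\mathrm{N}(0,\rho^2)$ is symmetric, the substitution $w=v/(\sqrt{2}\,\rho)$ in the Gaussian integral
\[
P(|V|<|u|)=2\int_0^{|u|}\frac{1}{\sqrt{2\pi}\,\rho}\,\mathrm{e}^{-v^2/(2\rho^2)}\,\mathrm{d}v
\]
identifies the conditional probability as $P(|V|<|u|)=\operatorname{erf}\!\left(|u|/(\sqrt{2}\,\rho)\right)$.

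Next I would integrate this conditional probability against the density of $U\sim\mathrm{N}(0,\sigma^2)$. Exploiting the evenness of the integrand in $u$, the probability reduces to
\[
P(U^2-V^2>0)=2\int_0^\infty \operatorname{erf}\!\left(\frac{u}{\sqrt{2}\,\rho}\right)\frac{1}{\sqrt{2\pi}\,\sigma}\,\mathrm{e}^{-u^2/(2\sigma^2)}\,\mathrm{d}u,
\]
which is exactly of the form handled by Lemma~\ref{le:bri} with $\alpha=1/(\sqrt{2}\,\sigma)$ and $\beta=1/(\sqrt{2}\,\rho)$. Since $\beta/\alpha=\sigma/\rho$, that lemma gives the inner integral as $\arctan(\sigma/\rho)/(\alpha\sqrt{\pi})=\sqrt{2}\,\sigma\,\arctan(\sigma/\rho)/\sqrt{\pi}$, and substituting back the multiplicative constants collapse (using $\sqrt{2\pi}=\sqrt{2}\sqrt{\pi}$) to yield $\tfrac{2}{\pi}\arctan(\sigma/\rho)$ directly.

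The computation is essentially routine once the reduction is made; the only delicate point is the bookkeeping of constants, so that the prefactor $2/(\sqrt{2\pi}\,\sigma)$ multiplied by $\sqrt{2}\,\sigma/\sqrt{\pi}$ simplifies precisely to $1/\pi$. I would emphasize that conditioning on $U$ rather than on $V$ is what produces $\arctan(\sigma/\rho)$ immediately: had I conditioned on $V$ I would instead obtain $1-\tfrac{2}{\pi}\arctan(\rho/\sigma)$ and would then need the identity $\arctan(x)+\arctan(1/x)=\pi/2$ to recover the stated form. Choosing the right variable to condition on therefore streamlines the argument and avoids that extra step.
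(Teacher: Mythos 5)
Your proof is correct and is essentially the paper's own argument: the paper likewise reduces $P(U^2-V^2>0)$ by symmetry (equivalently, your conditioning on $U$ plus Fubini) to the integral $\int_0^\infty \mathrm{e}^{-u^2/(2\sigma^2)}\operatorname{erf}\big(u/(\sqrt{2}\rho)\big)\,\mathrm{d}u$ and then applies Lemma~\ref{le:bri} with the same parameters $\alpha=1/(\sqrt{2}\sigma)$, $\beta=1/(\sqrt{2}\rho)$. One cosmetic slip in your closing remark: the constant product $\frac{2}{\sqrt{2\pi}\,\sigma}\cdot\frac{\sqrt{2}\,\sigma}{\sqrt{\pi}}$ equals $2/\pi$, not $1/\pi$, which is consistent with the correct final answer you display.
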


\begin{proof} The joint density function of the
random vector $(U,V)$ is $f_\sigma(u)f_\rho(v),$ where $f_s(u)={\rm
e}^{-u^2/(2s^2)}/(\sqrt{2\pi}s)$. Observe that the points $(u,v)\in\mathbb{R}^2$ such that  $u^2-v^2>0$ is the region where $-|u|<v<|u|$, hence  by symmetry,
\begin{align*}
P(U^2-V^2>0)&=4\int_0^\infty f_\sigma(u)\int_0^u f_\rho(v)\,{\rm
d}v\,{\rm d}u=\frac4{2\pi \sigma\rho}\int_0^\infty {\rm
e}^{-u^2/(2\sigma^2)}\int_0^u {\rm e}^{-v^2/(2\rho^2)}\,{\rm
d}v\,{\rm d}u\\&=\frac2{\pi \sigma\rho}\int_0^\infty {\rm
e}^{-u^2/(2\sigma^2)}\operatorname{erf}\Big(\frac
u{\sqrt{2}\rho}\Big)\sqrt{\frac\pi 2}\rho\,{\rm d}u=\frac
2\pi\arctan\Big(\frac\sigma\rho\Big),
\end{align*}
where in the last equality we have used Lemma \ref{le:bri}.
\end{proof}

Notice that with the notation of the  above lemma,
$P(U^2-V^2>0)+P(U^2-V^2<0)=1.$ Hence
\begin{equation}\label{eq:comple}
P(U^2-V^2<0)=1-\frac 2\pi\arctan\Big(\frac\sigma\rho\Big)=\frac
2\pi\arctan\Big(\frac\rho\sigma\Big),
\end{equation}
where we have used  the fact that $\arctan(x)+\arctan(1/x)=\pi/2$ or, simply,
the same lemma interchanging $U$ and $V.$ Observe also that when
$\sigma=\rho,$ $P(U^2-V^2>0)=P(U^2-V^2<0)=1/2,$ a result that, in
fact, is a straightforward consequence that in this situation
$U^2-V^2$ and $V^2-U^2$ have the same distribution.

\begin{propo}\label{p:propoEDiff} With the above notation:
\begin{enumerate}
\item[(a)] $\sum_{k=0}^n p_k=1.$
\item[(b)] For all $k\in\{0,1,\ldots,n\},$ $p_k=p_{n-k}.$
\item[(c)] When $n$ is odd, $\sum_{i\,\,\mathrm{even}} p_i=\sum_{i\,\,\mathrm{odd}} p_i=\frac{1}{2}.$
\item[(d)] When $n=2k$ is even,
\begin{equation}\label{e:sumaarctan}
\sum_{i\,\,\mathrm{even}} p_i=\frac
2\pi\arctan\Big(\sqrt{\frac{k+1}k}\,\Big)\quad \mbox{and}\quad
\sum_{i\,\,\mathrm{odd}} p_i=\frac
2\pi\arctan\Big(\sqrt{\frac{k}{k+1}}\,\Big).
\end{equation}

\end{enumerate}
\end{propo}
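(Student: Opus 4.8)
The first two items are the direct analogues of Proposition~\ref{p:propoEDOL}, and I would establish them first. Item~(a) is immediate since the $p_k$ form a probability mass function. For item~(b), the natural substitution is $\lambda\mapsto 1/\lambda$: the reciprocal polynomial $\widetilde{Q}(\lambda):=\lambda^n Q(1/\lambda)=A_0\lambda^n+A_1\lambda^{n-1}+\cdots+A_{n-1}\lambda+A_n$ has as its roots the reciprocals of those of $Q$, so it has exactly $n-k$ roots of modulus smaller than $1$ whenever $Q$ has $k$ of them (the boundary case of a root on the unit circle being negligible). Since $\widetilde{Q}$ is obtained from $Q$ by reversing the coefficient sequence $(A_0,\ldots,A_n)$, and this sequence is i.i.d., $\widetilde{Q}$ and $Q$ induce the same law for the number of roots inside the unit disk. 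Combining the two observations gives $p_k=p_{n-k}$.

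The heart of the proof is items~(c) and~(d), for which I would reduce the parity of $X$ to the sign of $Q(1)Q(-1)$. Writing $Q(\lambda)=A_n\prod_j(\lambda-\lambda_j)$, one computes
\[
Q(1)Q(-1)=A_n^2(-1)^n\prod_j(1-\lambda_j^2).
\]
Each real root in $(-1,1)$ contributes a positive factor $1-\lambda_j^2$, each real root of modulus greater than $1$ a negative factor, and each complex-conjugate pair the positive factor $|1-\lambda^2|^2$. Hence $\sign\big(\prod_j(1-\lambda_j^2)\big)=(-1)^{R_{\mathrm{out}}}$, where $R_{\mathrm{out}}$ is the number of real roots of modulus greater than $1$. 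Since complex roots occur in conjugate pairs (contributing evenly to $X$), the parity of $X$ coincides with the parity of the number $R_{\mathrm{in}}$ of real roots in $(-1,1)$; and because $R_{\mathrm{in}}+R_{\mathrm{out}}\equiv n\pmod 2$, a short count gives $\sign(Q(1)Q(-1))=(-1)^{R_{\mathrm{in}}}$. Thus $X$ is even if and only if $Q(1)Q(-1)>0$, so that $\sum_{i\,\mathrm{even}}p_i=P(Q(1)Q(-1)>0)$, all the degenerate configurations (vanishing leading coefficient, roots on the unit circle, $Q(\pm1)=0$) being measure-zero events.

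It then remains to evaluate $P(Q(1)Q(-1)>0)$ for the two Gaussian variables $Q(1)=\sum_{i=0}^n A_i$ and $Q(-1)=\sum_{i=0}^n(-1)^iA_i$. I would introduce $S:=Q(1)+Q(-1)=2\sum_{i\,\mathrm{even}}A_i$ and $D:=Q(1)-Q(-1)=2\sum_{i\,\mathrm{odd}}A_i$, which are independent (they involve disjoint subsets of the $A_i$) and satisfy $Q(1)Q(-1)=\tfrac14(S^2-D^2)$, so that $P(Q(1)Q(-1)>0)=P(S^2-D^2>0)$. When $n$ is odd both index classes have the same cardinality, $S$ and $D$ are identically distributed, and $P(S^2-D^2>0)=\tfrac12$, giving~(c). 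When $n=2k$ there are $k+1$ even and $k$ odd indices, so $S\sim\mathrm{N}(0,4(k+1))$ and $D\sim\mathrm{N}(0,4k)$; Lemma~\ref{le:normals} with $\sigma=2\sqrt{k+1}$, $\rho=2\sqrt{k}$ then yields $P(S^2-D^2>0)=\tfrac2\pi\arctan\sqrt{(k+1)/k}$, and the companion formula follows from \eqref{eq:comple}. This proves~(d).

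The main obstacle is the parity reduction in the second step: making the factorization argument fully rigorous requires care in tracking how real roots inside $(-1,1)$, real roots outside, and complex-conjugate pairs each contribute to both $\sign(Q(1)Q(-1))$ and the parity of $X$, together with a clean justification that the exceptional (non-hyperbolic) configurations carry zero probability. Once the equivalence $X\text{ even}\Leftrightarrow Q(1)Q(-1)>0$ is secured, the remaining computations are routine applications of the Gaussian lemmas already at hand.
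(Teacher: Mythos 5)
Your proof is correct, and it reaches the arctangent formulas by a genuinely different route than the paper at the one non-trivial step. Items (a) and (b) and the final Gaussian computation are essentially the paper's: the reciprocal-polynomial argument for (b) is identical, and your variables $S/2$ and $D/2$ are exactly the paper's $U=A_n+A_{n-2}+\cdots$ and $V=A_{n-1}+A_{n-3}+\cdots$, fed into Lemma~\ref{le:normals} and \eqref{eq:comple} in the same way. The difference lies in how the parity of $X$ is converted into a sign condition. The paper transports $Q$ through the M\"obius substitution $\lambda=(z+1)/(z-1)$ to the polynomial $Q^\star$ of \eqref{e:polyestar} and applies the criterion from Proposition~\ref{p:propoEDOL}(c) (an even number of left-half-plane roots if and only if the product of the extreme coefficients is positive), which yields the condition $(U+V)(U-V)>0$; you instead factor $Q(1)Q(-1)=A_n^2(-1)^n\prod_j(1-\lambda_j^2)$ over the roots and count parities directly, obtaining the condition $Q(1)Q(-1)>0$. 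The two conditions coincide for even $n$, because then $U+V=Q(1)$ and $U-V=Q(-1)$. Your version is in fact slightly more robust: the criterion quoted from Proposition~\ref{p:propoEDOL}(c) is, as literally stated, degree-dependent --- for a polynomial of odd degree an even number of left-half-plane roots corresponds to a \emph{negative} product of extreme coefficients, so the paper's displayed condition characterizes odd $X$ rather than even $X$ when $n$ is odd. This is harmless there, since for odd $n$ the variables $U$ and $V$ are i.i.d.\ and both events have probability $1/2$, which is all that item (c) needs; but your criterion ($X$ even $\Leftrightarrow Q(1)Q(-1)>0$) holds uniformly in $n$, at the cost of redoing by hand a root-counting argument that the paper gets by citation. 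Both approaches rely on the same measure-zero discards (vanishing leading coefficient, roots of modulus one, $Q(\pm1)=0$), which you handle correctly.
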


\begin{proof}
The first assertion is obvious. To see the second one we compare the
difference equation
$a_nx_{k+n}+a_{n-1}x_{k+n-1}+\cdots+a_1x_{k+1}+a_0x_k=0,$ with
$a_i\in\R,\,i=0,1,\ldots,n,$ with characteristic polynomial
$Q(\lambda)=a_n\lambda^{n}+a_{n-1}\lambda^{n-1}+\cdots+a_2\lambda^2+a_1\lambda+a_0$
with the difference equation $a_nx_k+a_{n-1}x_{k+1}+\cdots
+a_{0}x_{k+n}=0$ with characteristic polynomial $
{Q}^*(\lambda)=a_n+a_{n-1}\lambda+\cdots
+a_1\lambda^{n-1}+a_0\lambda^n.$ Notice that if $Q(\lambda)$ has $m$
non-zero roots with modulus smaller than 1 and $n-m$ with modulus
bigger than 1, then the converse follows for $Q^*(\lambda)$
because $Q(\lambda)=0$ if and only if
$Q^*(\frac{1}{\lambda})=0.$
 From this result applied to the corresponding random polynomials
we get that $p_k=p_{n-k},$ because both have identically distributed
coefficients. So we have proved statement (b). To prove items $(c)$ and $(d)$ recall first that it was proved in item $(c)$
of Proposition~\ref{p:propoEDOL} that a polynomial $Q(\lambda)=a_n\lambda^{n}+a_{n-1}\lambda^{n-1}+\cdots+a_2\lambda^2+a_1\lambda+a_0$, without roots with zero real part,  has an even number of roots with negative real part if and only if $a_na_0>0.$ By using the  polynomial
\begin{align*}
Q^\star(z)&=a_n(z+1)^n+a_{n-1}(z+1)^{n-1}(z-1)+\ldots+a_0(z-1)^n\\&=(a_n+a_{n-1}+\cdots+a_1+a_0)z^n+\cdots+(a_n-a_{n-1}+a_{n-2}-\cdots +(-1)^na_0),
\end{align*}
introduced in Section~\ref{s:linear} (Equation \eqref{e:polyestar}) we get that $Q(\lambda),$ without roots of modulus 1, has an even number ($2m$) of roots with modulus smaller than 1 if and only if $Q^\star(z)$ has  exactly $2m$ roots with negative real part and this happens if and only if $ (a_n+a_{n-1}+\cdots+a_1+a_0)\cdot(a_n-a_{n-1}+a_{n-2}-\cdots +(-1)^na_0)>0.$ Hence, considering the corresponding random polynomials, we have that
\begin{align*}
\sum_{i\,\,\mathrm{even}} p_i&=
    P\big( (A_n+A_{n-1}+\cdots+A_0)\cdot(A_n-A_{n-1}+\cdots +(-1)^nA_0)>0\big)\\&=P(U^2-V^2>0),
\end{align*}
 where $U=A_n+A_{n-2}+A_{n-4}+\cdots$ and $V=A_{n-1}+A_{n-3}+A_{n-5}+\cdots$ and the sums end either at $A_0$ or $A_1$ according  the parity of $n.$ Since $A_j\sim \mathrm{N}(0,1)$ and all $A_j$ are independent we get that when $n=2k$  (resp. $n=2k-1$) then  $U\sim \mathrm{N}(0,k+1)$ (resp.  $U\sim \mathrm{N}(0.k)$)  and $V\sim \mathrm{N}(0,k)$ and $U$ and $V$ are independent. Hence, by using Lemma~\ref{le:normals}, we obtain that when $n=2k-1,$ $P(U^2-V^2>0)=1/2$ and that when $n=2k,$
 \[
 \sum_{i\,\,\mathrm{even}} p_i=P(U^2-V^2>0)=\frac 2\pi\arctan\Big(\sqrt{\frac{k+1}k}\,\Big).
 \]
The sum of all $p_i$ when $i$ is odd  can be  obtained from the above formula, see also \eqref{eq:comple}.
 \end{proof}

\begin{corol}\label{c:propoEDD}
(i) Consider the linear random homogeneous difference equation of order $n,$ let $X$ be the random variable
defined above and  $p_k=P(X=k)$. Then the probabilities $p_k$
satisfy all the conclusions of Lemma \ref{l:lemmageneral}. In particular $E(X)=n/2.$

(ii)  Moreover the new affine relations given in Equations \eqref{e:sumaarctan} hold. In particular, for $n=2,$ $p_0=p_2=\frac1\pi\arctan(\sqrt{2})$ and
$p_1=\frac2\pi\arctan(\frac1{\sqrt{2}})$; and for $n=4,$  $ p_1=p_3=\frac1\pi\arctan(\sqrt{\frac{2}{3}}).$
\end{corol}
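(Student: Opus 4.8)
The plan is to deduce everything from Proposition~\ref{p:propoEDiff}, which already carries all the analytic content (it is there that Lemmas~\ref{le:bri} and~\ref{le:normals} do the work). For part~(i), the single observation needed is that Proposition~\ref{p:propoEDiff}(b) gives $p_k=p_{n-k}$ for every $k$, which is precisely the standing hypothesis of Lemma~\ref{l:lemmageneral}. I would therefore just invoke that lemma: its main conclusion yields $E(X)=n/2$, and items~(a) and~(b)---which rely only on the symmetry $p_k=p_{n-k}$ together with the normalization $\sum_k p_k=1$ of Proposition~\ref{p:propoEDiff}(a)---then hold verbatim.

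The one point needing care (which I view as the main subtlety rather than a genuine obstacle) concerns items~(c) and~(d) of Lemma~\ref{l:lemmageneral}, since these carry the extra hypothesis $\sum_{i\,\mathrm{odd}}p_i=\sum_{i\,\mathrm{even}}p_i=\tfrac12$ for $n$ even. By Proposition~\ref{p:propoEDiff}(c) this extra hypothesis does hold, but only when $n$ is odd; for $n$ even Proposition~\ref{p:propoEDiff}(d) shows instead that the two sums equal $\tfrac2\pi\arctan(\sqrt{(k+1)/k})$ and $\tfrac2\pi\arctan(\sqrt{k/(k+1)})$, which are not $\tfrac12$. Hence items~(c)--(d) are simply not triggered in the difference-equation setting, and in their place the arctan identities~\eqref{e:sumaarctan} supply the correct affine relations. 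I would state this explicitly, so that ``all the conclusions of Lemma~\ref{l:lemmageneral}'' is read as ``all conclusions whose hypotheses are met here''.

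For part~(ii) the new relations are exactly Proposition~\ref{p:propoEDiff}(d), so nothing remains but specialization. For $n=2$ (so $k=1$) I would combine $p_0+p_2=\tfrac2\pi\arctan(\sqrt2)$ from~\eqref{e:sumaarctan} with $p_0=p_2$ from Proposition~\ref{p:propoEDiff}(b) to obtain $p_0=p_2=\tfrac1\pi\arctan(\sqrt2)$, and read $p_1=\tfrac2\pi\arctan(1/\sqrt2)$ directly off the odd-sum in~\eqref{e:sumaarctan}; the consistency with $\sum_k p_k=1$ is then just the identity $\arctan(x)+\arctan(1/x)=\pi/2$ already recorded in~\eqref{eq:comple}. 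For $n=4$ (so $k=2$) the odd-sum in~\eqref{e:sumaarctan} reads $p_1+p_3=\tfrac2\pi\arctan(\sqrt{2/3})$, and $p_1=p_3$ yields $p_1=p_3=\tfrac1\pi\arctan(\sqrt{2/3})$.

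I expect no real obstacle here: the corollary is a bookkeeping consequence of Proposition~\ref{p:propoEDiff}, and the only thing demanding attention is the honest accounting of which lemma conclusions survive in the even case, where the clean symmetric splitting $\tfrac12/\tfrac12$ is deformed into the arctan values of~\eqref{e:sumaarctan}.
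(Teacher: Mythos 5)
Your proposal is correct and follows essentially the same route as the paper, which presents the corollary as an immediate consequence of Proposition~\ref{p:propoEDiff} together with Lemma~\ref{l:lemmageneral}, giving no separate proof. Your one point of added care---reading ``all the conclusions of Lemma~\ref{l:lemmageneral}'' as only those whose hypotheses are met, since for $n$ even the extra hypothesis $\sum_{i\,\mathrm{odd}}p_i=\sum_{i\,\mathrm{even}}p_i=\tfrac12$ fails by Proposition~\ref{p:propoEDiff}(d)---is exactly the right reading: invoking items (c)--(d) of the lemma literally would give, e.g., $p_0=p_2=\tfrac14$ for $n=2$, contradicting part~(ii), so your honest accounting repairs a loose phrasing in the statement rather than revealing any gap in your argument.
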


In this case, and for the situations where we have not been able to obtain the exact probabilities  we have done similar computations than in the previous section, first with the Monte Carlo method, generating for each order $n=0,\ldots, 10,$
$10^8$ random vectors $(A_0,\ldots,A_n)\in \R^{n+1}$  whose
components are pseudo-random numbers with $\mathrm{N}(0,1)$ distribution. Then,  by using the relations in Proposition~\ref{p:propoEDiff} and Corollary
\ref{c:propoEDD} we have  performed a least squares refinement.

    For
    instance for $n=4$, by Corollary \ref{c:propoEDD} we have
    $p_1=p_3=\arctan(\sqrt{2/3})/\pi\simeq 0.217953$;
    $p_2=2\arctan(\sqrt{3/2})/\pi-2p_0$ and $p_4=p_0$. Hence, we fix the values $\widehat{p}_1=p_1$ and $\widehat{p}_3=p_3$ and  system \eqref{e:pMqb}  can be written in the form
    $$\mathrm{M}\widehat{\mathbf{q}}+b=
    \left(\begin{array}{r}
    1\\
    -2\\
    1
    \end{array}\right)\cdot\left(\widehat{p}_0\right) +\left(\begin{array}{c}
    0\\
    \frac{2}{\pi}\arctan\left(\sqrt{\frac{3}{2}}\right)\\
    0
    \end{array}\right)=\left(\begin{array}{c}
    \widetilde{p}_0\\
    \widetilde{p}_2\\
    \widetilde{p}_4
    \end{array}\right).$$
    Hence we can easily find the least squares solution of the above incompatible linear system:
    $$
    (1,-2,1)\cdot\left[\left(\begin{array}{r}
    1\\
    -2\\
    1
    \end{array}\right)\cdot\left(\widehat{p}_0\right) +\left(\begin{array}{c}
    0\\
    \frac{2}{\pi}\arctan\left(\sqrt{\frac{3}{2}}\right)\\
    0
    \end{array}\right)\right]=(1,-2,1)\cdot\left(\begin{array}{c}
    \widetilde{p}_0\\
    \widetilde{p}_2\\
    \widetilde{p}_4
    \end{array}\right),
    $$
    and thus we get the result in Equation \eqref{e:leastquaresolution}:
    $
    \widehat{p}_0=\frac{1}{6}\left(\widetilde{p}_0-2\widetilde{p}_2
    +\widetilde{p}_4\right)+\frac{2}{3\pi}\arctan\left(\sqrt{{3}/{2}}\right),
    $
    and therefore
       $\widehat{p}_4=\widehat{p}_0$ and 
    $\widehat{p}_2= 2\arctan(\sqrt{3/2})/\pi-2\widehat{p}_0. $
    Since our Monte Carlo simulations give
    $$
    (\widetilde{p}_0,\widetilde{p}_2,\widetilde{p}_4)= \left(\frac{2056203}{20000000},
    \frac{7169499}{20000000},
    \frac{10285619}{100000000} \right)\simeq\left(0.10281,0.35847,0.10286\right),
    $$ the above relations show that
    $
    (\widehat{p}_0,\widehat{p}_2,\widehat{p}_4) \simeq\left(0.10282,0.35846,0.10282\right).
    $

    All our  results are collected in Table 5.

    \begin{center}
        \begin{tabular}{|l|l|l|l|}
            \hline
            Order & Observed frequency &  Least squares & Relations (Prop.~\ref{p:propoEDiff} and Cor.~\ref{c:propoEDD}))\\
            \hline
            \hline
            $n=1$ & $\widetilde{p}_0=0.49991$ & &$p_0=0.5$\\
            {}  & $\widetilde{p}_1=0.50009$ &   &$p_1=0.5$\\
            \hline
            $n=2$ & $\widetilde{p}_0=0.30410$ &&$p_0=\frac1\pi\arctan(\sqrt{2})\simeq0.304087$\\
            {}  & $\widetilde{p}_1=0.39184$ &   &$p_1=\frac2\pi\arctan(\frac1{\sqrt{2}})\simeq0.391826$\\
            {}  & $\widetilde{p}_2=0.30406$ &  &$p_2=\frac1\pi\arctan(\sqrt{2})\simeq0.304087$\\
            \hline
            $n=3$ & $\widetilde{p}_0=0.17251$ &$\widehat{p}_0=0.17248$ &$p_0$\\
            {}  & $\widetilde{p}_1=0.32752$ &  $\widehat{p}_1=0.32752$ &$p_1=\frac{1}{2}-p_0$\\
            {}  & $\widetilde{p}_2=0.32753$ &  $\widehat{p}_2=0.32752$ &$p_2=\frac{1}{2}-p_0$\\
            {}  & $\widetilde{p}_3=0.17244$ &  $\widehat{p}_3=0.17248$&$p_3=p_0$\\
            \hline
            $n=4$ & $\widetilde{p}_0=0.10281$ &  $\widehat{p}_0=0.10282$ &$p_0$\\
            {}  & $\widetilde{p}_1=0.21792$ &  $\widehat{p}_1=0.21795$ &$p_1=\frac1\pi\arctan(\sqrt{\frac{2}{3}})\simeq0.217953$\\
            {}  & $\widetilde{p}_2=0.35847$ &  $\widehat{p}_2=0.35846$ &$p_2=\frac2\pi\arctan(\sqrt{\frac{3}{2}})-2p_0$\\
            {}  & $\widetilde{p}_3=0.21794$ &  $\widehat{p}_3=0.21795$ &$p_3=\frac1\pi\arctan(\sqrt{\frac{2}{3}})\simeq0.217953$\\
            {}  & $\widetilde{p}_4=0.10286$ &  $\widehat{p}_4=0.10282$ &$p_4=p_0$\\
            \hline
            $n=5$ & $\widetilde{p}_0=0.05909$ & $\widehat{p}_0=0.05909$ &  $p_0$\\
            {}  & $\widetilde{p}_1=0.15331$ & $\widehat{p}_1=0.15333$ &  $p_1$\\
            {}  & $\widetilde{p}_2=0.28760$ & $\widehat{p}_2=0.28758$ &  $p_2=\frac{1}{2}-p_0-p_1$\\
            {}  & $\widetilde{p}_3=0.28756$ & $\widehat{p}_3=0.28758$ &  $p_3=\frac{1}{2}-p_0-p_1$\\
            {}  & $\widetilde{p}_4=0.15335$ & $\widehat{p}_4=0.15333$ &  $p_4=p_1$\\
            {}  & $\widetilde{p}_5=0.05908$ & $\widehat{p}_5=0.05909$ &  $p_5=p_0$\\
            \hline
            $n=6$ & $\widetilde{p}_0=0.03501$ & $\widehat{p}_0=0.03502$ &  $p_0$\\
            {}  & $\widetilde{p}_1=0.09726$ & $\widehat{p}_1=0.09726$ &  $p_1$\\
            {}  & $\widetilde{p}_2=0.23777$ & $\widehat{p}_2=0.23779$ &  $p_2=\frac1\pi\arctan(\sqrt{\frac{4}{3}})-p_0$\\
            {}  & $\widetilde{p}_3=0.25985$ & $\widehat{p}_3=0.25986$ &  $p_3=\frac2\pi\arctan(\sqrt{\frac{3}{4}})-2p_1$\\
            {}  & $\widetilde{p}_4=0.23781$ & $\widehat{p}_4=0.23779$ &  $p_4=\frac1\pi\arctan(\sqrt{\frac{4}{3}})-p_0$\\
            {}  & $\widetilde{p}_5=0.09724$ & $\widehat{p}_5=0.09726$ &  $p_5=p_1$\\
            {}  & $\widetilde{p}_6=0.03505$ & $\widehat{p}_6=0.03502$ &  $p_6=p_0$\\
            \hline
            
                    \end{tabular}
    \end{center}

\vspace{-1cm}
    \begin{center}
        \begin{tabular}{|l|l|l|l|}
            \hline
            Order & Observed frequency &  Least squares &  Relations (Prop.~\ref{p:propoEDiff} and Cor.~\ref{c:propoEDD})\\
            \hline
            \hline
            $n=7$ & $\widetilde{p}_0=0.02025$ & $\widehat{p}_0=0.02025$ &  $p_0$\\
            {}  & $\widetilde{p}_1=0.06432$ & $\widehat{p}_1=0.06430$ &  $p_1$\\
            {}  & $\widetilde{p}_2=0.17174$ & $\widehat{p}_2=0.17176$ &  $p_2$\\
            {}  & $\widetilde{p}_3=0.24376$ & $\widehat{p}_3=0.24369$ &  $p_3=\frac{1}{2}-p_0-p_1-p_2$\\
            {}  & $\widetilde{p}_4=0.24361$ & $\widehat{p}_4=0.24369$ &  $p_4=\frac{1}{2}-p_0-p_1-p_2$\\
            {}  & $\widetilde{p}_5=0.17177$ & $\widehat{p}_5=0.17176$ &  $p_5=p_2$\\
            {}  & $\widetilde{p}_6=0.06428$ & $\widehat{p}_6=0.06430$ &  $p_6=p_1$\\
            {}  & $\widetilde{p}_7=0.02025$ & $\widehat{p}_7=0.02025$ &  $p_7=p_0$\\
            \hline
            $n=8$ & $\widetilde{p}_0=0.01194$ & $\widehat{p}_0=0.01196$ &  $p_0$\\
            {}  & $\widetilde{p}_1=0.03994$ & $\widehat{p}_1=0.03994$ &  $p_1$\\
            {}  & $\widetilde{p}_2=0.12272$ & $\widehat{p}_2=0.12726$ &  $p_2$\\
            {}  & $\widetilde{p}_3=0.19230$ & $\widehat{p}_3=0.19234$ &  $p_3=\frac1\pi\arctan(\sqrt{\frac{4}{5}})-p_1$\\
            {}  & $\widetilde{p}_4=0.25701$ & $\widehat{p}_4=0.25700$ &  $p_4=\frac2\pi\arctan(\sqrt{\frac{5}{4}})-2p_0-2p_2$\\
            {}  & $\widetilde{p}_5=0.19238$ & $\widehat{p}_5=0.19234$ &  $p_5=\frac1\pi\arctan(\sqrt{\frac{4}{5}})-p_1$\\
            {}  & $\widetilde{p}_6=0.12724$ & $\widehat{p}_6=0.12726$ &  $p_6=p_2$\\
            {}  & $\widetilde{p}_7=0.03994$ & $\widehat{p}_7=0.03994$ &  $p_7=p_1$\\
            {}  & $\widetilde{p}_8=0.01197$ & $\widehat{p}_8=0.01196$ &  $p_8=p_0$\\
            \hline
            $n=9$ & $\widetilde{p}_0=0.00693$ & $\widehat{p}_0=0.00693$ &  $p_0$\\
            {}  & $\widetilde{p}_1=0.02556$ & $\widehat{p}_1=0.02556$ &  $p_1$\\
            {}  & $\widetilde{p}_2=0.08711$ & $\widehat{p}_2=0.08711$ &  $p_2$\\
            {}  & $\widetilde{p}_3=0.15653$ & $\widehat{p}_3=0.15653$ &  $p_3$\\
            {}  & $\widetilde{p}_4=0.22389$ & $\widehat{p}_4=0.22386$ &  $p_4=\frac{1}{2}-p_0-p_1-p_2-p_3$\\
            {}  & $\widetilde{p}_5=0.22382$ & $\widehat{p}_5=0.22386$ &  $p_5=\frac{1}{2}-p_0-p_1-p_2-p_3$\\
            {}  & $\widetilde{p}_6=0.15654$ & $\widehat{p}_6=0.15653$ &  $p_6=p_3$\\
            {}  & $\widetilde{p}_7=0.08712$ & $\widehat{p}_7=0.08711$ &  $p_7=p_2$\\
            {}  & $\widetilde{p}_8=0.02557$ & $\widehat{p}_8=0.02556$ &  $p_8=p_1$\\
            {}  & $\widetilde{p}_9=0.00693$ & $\widehat{p}_9=0.00693$ &  $p_9=p_0$\\
            \hline
            
            \end{tabular}
    \end{center}

    \begin{center}
        \vspace{-1cm}
        \begin{tabular}{|l|l|l|l|}
            \hline
            Order & Observed frequency &  Least squares &  Relations (Prop.~\ref{p:propoEDiff} and Cor.~\ref{c:propoEDD})\\
            \hline
   \hline
            $n=10$ & $\widetilde{p}_0=0.00409$ & $\widehat{p}_0=0.00411$ &  $p_0$\\
            {}  & $\widetilde{p}_1=0.01567$ & $\widehat{p}_1=0.01566$ &  $p_1$\\
            {}  & $\widetilde{p}_2=0.06089$ & $\widehat{p}_2=0.06091$ &  $p_2$\\
            {}  & $\widetilde{p}_3=0.11500$ & $\widehat{p}_3=0.11497$ &  $p_3$\\
            {}  & $\widetilde{p}_4=0.19950$ & $\widehat{p}_4=0.19947$ &  $p_4=\frac1\pi\arctan(\sqrt{\frac{6}{5}})-p_0-p_2$\\
            {}  & $\widetilde{p}_5=0.20978$ & $\widehat{p}_5=0.20976$ &  $p_5=\frac2\pi\arctan(\sqrt{\frac{5}{6}})-2p_1-2p_3$\\
            {}  & $\widetilde{p}_6=0.19941$ & $\widehat{p}_6=0.19947$ &  $p_6=\frac1\pi\arctan(\sqrt{\frac{6}{5}})-p_0-p_2$\\
            {}  & $\widetilde{p}_7=0.11499$ & $\widehat{p}_7=0.11497$ &  $p_7=p_3$\\
            {}  & $\widetilde{p}_8=0.06088$ & $\widehat{p}_8=0.06091$ &  $p_8=p_2$\\
            {}  & $\widetilde{p}_9=0.01570$ & $\widehat{p}_9=0.01566$ &  $p_9=p_1$\\
            {}  & $\widetilde{p}_{10}=0.00408$ & $\widehat{p}_{10}=0.00411$ &  $p_{10}=p_0$\\
            \hline
        \end{tabular}

        \bigskip

        {\bf Table 5.} Stability indexes for order $n$ linear random
        homogeneous difference equations.
    \end{center}

\section*{Acknowledgments}    
We want to thank Professors Maria Jolis and
Joan Torregrosa for their helpful indications.
We also acknowledge the anonymous reviewer for the careful reading and for addressing us very interesting suggestions that have allowed us to improve our work.

\end{document}